\theoremstyle{plain} 
\newtheorem{thm}{Theorem}[section]
\newtheorem{lem}[thm]{Lemma}
\newtheorem{prop}[thm]{Proposition}
\theoremstyle{definition}
\newtheorem{defn}{Definition}[section]
\theoremstyle{remark}
\newtheorem{rem}[thm]{Remark}
\newcommand{\C}{\mathbb{C}}
\newcommand{\R}{\mathbb{R}}
\newcommand{\T}{\mathbb{T}}
\newcommand{\Z}{\mathbb{Z}}
\newcommand\bbA{{\mathbb{A}}}
\newcommand\bbB{{\mathbb{B}}}
\newcommand{\sfA}{{\mathsf{A}}}
\newcommand{\dr}{\mathrm{d}}
\newcommand{\de}{\partial}
\newcommand\restr[2]{{
  \left.\kern-\nulldelimiterspace 
  #1 
  \vphantom{\big|} 
  \right|_{#2} 
  }}
\def\g{\mathfrak{g}}
\def\sfA{\mathsf{A}}
\def\sfB{\mathsf{B}}
\def\sfC{\mathsf{C}}
\def\sfa{\mathsf{a}}
\def\sfb{\mathsf{b}}
\def\calY{\mathcal{Y}}
\def\calH{\mathcal{H}}
\def\calL{\mathcal{L}}
\def\calV{\mathcal{V}}
\def\calP{\mathcal{P}}
\def\calS{\mathcal{S}}
\def\coho{H^{\bullet}}
\def\calB{\mathcal{B}}
\def\calF{\mathcal{F}}
\def\calQ{\mathcal{Q}}
\begin{document}
\title{Theta Invariants of lens spaces via the BV-BFV formalism}
\author{Alberto S. Cattaneo${}^1$}
\author{Pavel Mnev${}^{2}$}
\author{Konstantin Wernli${}^1$}
\address{${}^1$Institut f\"ur Mathematik, Universit\"at Z\"urich, Winterthurerstrasse 190, 8053 Z\"urich, Switzerland}
\email{cattaneo@math.uzh.ch, konstantin.wernli@math.uzh.ch}
\address{${}^2$ University of Notre Dame, Notre Dame, IN 46556, USA}
\email{pmnev@nd.edu}

\thanks{A. S. C. and K. W. acknowledge partial support of SNF Grant No. 200020-172498/1. This research was (partly) supported by the NCCR SwissMAP, funded by the Swiss National Science Foundation, and by the COST Action MP1405 QSPACE, supported by COST (European Cooperation in Science and Technology). P. M. acknowledges partial support of RFBR Grant No. 17-01-00283a.  K. W. acknowledges partial support by the Forschungskredit of the University of Zurich, grant no. FK-16-093.}

\tikzset{middlearrow/.style={
        decoration={markings,
            mark= at position 0.5 with {\arrow{#1}} ,
        },
        postaction={decorate}
    }
}
\begin{abstract}
The goal of this paper is to investigate the Theta invariant --- an invariant of framed 3-manifolds associated with the lowest order contribution to the Chern-Simons partition function --- in the context of the quantum BV-BFV formalism. Namely, we compute the state on the solid torus to low degree in $\hbar$, and apply the gluing procedure to compute the Theta invariant of lens spaces. We use a distributional propagator which does not extend to a compactified configuration space, so to compute loop diagrams we have to define a regularization of the product of the distributional propagators, which is done in an \emph{ad hoc} fashion. Also, a polarization has to be chosen for the quantization process. Our results agree with results in the literature for one type of polarization, but for another type of polarization there are extra terms. 
\end{abstract}
\maketitle
\tableofcontents
\section{Introduction}
The goal of this paper is to investigate the Theta invariant - the first example of a perturbative Chern-Simons invariant - through the BV-BFV formalism\footnote{The letters B, F and V here stand for Batalin, Fradkin and Vilkovisky, who introduced what is now known as BV (\cite{Batalin1977, Batalin1981, Batalin1983}) and BFV \cite{Fradkin1975, Fradkin1977, Batalin1983a, Batalin1986} formalisms related to the quantization of gauge theories.}, which allows for perturbative quantization compatible with cutting and gluing (\cite{Cattaneo2017}). We will briefly discuss the BV-BFV formalism in Section \ref{sec:BVBFV}.\\  
The concept of perturbative Chern-Simons invariants goes back to the idea of Schwarz \cite{Schwarz1978,Schwarz2007} that physical quantities associated to topological quantum field theories should be topological invariants of the spacetime $M$ on which the theory is defined. Among physical quantities of interest for topological quantum field theories is the partition function. If the space of fields of the theory is $F_M$, and its action functional is $S_M \colon F_M \to \R$, the first attempt at defining the partition function is
\begin{equation}
Z_M = \int_{F_M}e^{\frac{i}{\hbar}S_M[\Phi]}. \label{eq:def_partition}
\end{equation}
On the right-hand side one is supposed to integrate over the space of fields $F_M$. However, apart from the case $\dim M=1$, there is hardly a case of interest where it has been possible to define a measure on $F_M$. Therefore, an alternative definition of $Z_M$ is required. There are different ideas for this in the literature, one of them being \emph{perturbative quantization}. Here one extrapolates the asymptotic behaviour in the $\hbar \to 0$ limit of an integral of the form $Z[\hbar] = \int_F \exp(i/\hbar S)$, where $F$ is a finite-dimensional manifold, to the infinite-dimensional case. In the finite-dimensional case, the result is a power series describing the asymptotics whose coefficients depend on the local behaviour of $S$ around critical points, and there is hope to define a similar power series in the infinite-dimensional case. Now, if the theory is topological, we expect the coefficients of this power series to be topological invariants of the spacetime. In Chern-Simons theory, this is not quite the case: It has been shown repeatedly (\cite{Axelrod1991, Axelrod1994, Kontsevich1994, Kuperberg1999, Bott1998, Lescop2004}) that the coefficients, starting with the lowest order, depend on choices that are made in the quantization process. However, it is possible to cancel the dependence on the choices, at the price of introducing a framing (i.e. a trivialization of the tangent bundle of the spacetime manifold). The result is an invariant of framed 3-manifolds known as Theta invariant. We review perturbative Chern-Simons quantization in more detail in Section \ref{sec:CS}. \\ 
\subsection{Main results}
In this paper we compute the contribution of the theta graph on lens spaces by the cutting and gluing method provided by the  quantum BV-BFV formalism, and one of the main purposes of this paper is to investigate the power of that method. However, the results that we obtain are interesting on their own. The main result the of the paper is the following. Let $(\g,\langle\cdot,\cdot\rangle)$ be a quadratic Lie algebra which allows a splitting into maximal isotropic subspaces $\g = V \oplus W$. Use the polarizations $\mathcal{P}, \calP'$ induced by this splitting (considering either $V$ or $W$ to be the fibers of the polarization) to perform BV-BFV quantization of the Chern-Simons theory with coefficient Lie algebra $\g$. Then we have the following result:
\begin{thm}
Consider the lens space $L_{p,q}^\varphi$ obtained from gluing $L_{p,q} = (S^1 \times D) \cup_{\varphi} (S^1 \times D) $, where $\varphi\colon S^1 \times S^1 \to S^1\times S^1$ is given by $\varphi = \begin{pmatrix}
m & p \\
n & q
\end{pmatrix}$. Then, the two-loop contribution to the state of split Chern-Simons theory on the lens space, obtained from gluing the states on two solid tori in polarizations $\mathcal{P}$ and $\calP'$  is given by 
 \begin{equation}
 w^{MT}_2 = e(\g)\left(\frac{1}{2}s(q,p) + \frac{q + m}{12p}\right)
 \end{equation}
 if  $V,W$ are subalgebras of $\g$\footnote{Hence the superscript $MT$ for Manin triple.} Here $s(q,p)$ denotes a Dedekind sum (see \eqref{eq:defDedekindSum}) and $e(\g)$ a Lie-algebra theoretic coefficient defined in \eqref{eq:defecc}.
 Otherwise, the two-loop contribution is 
 \begin{equation}
 w_2^{NMT} = w_2^{MT} + e'(\g)\left(\frac{1}{2}s(q,p) + \sum_{k=0}^{p-1}\eta_{S^1}(k/p)f(qk/p) + \eta_{S^1}(k/p)f(mk/p) + 
\frac{q + m}{2\pi^2}H_{1/p}\right)
 \end{equation}
 where  $e'(\g)$ is a Lie-algebra theoretic coefficients defined in \eqref{eq:defecc2}  $\eta_{S^1}$ and $f$ are functions on $S^1$ defined in \eqref{eq:circleprop} and \eqref{eq:def_f} respectively and $H_x$ denotes analytic extension of the harmonic numbers to $\R$. 
\end{thm}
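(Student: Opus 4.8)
The plan is to compute the two-loop part of the BV-BFV state on each solid torus separately and then assemble the invariant of $L_{p,q}^\varphi$ from the gluing pairing. Organizing the state on $S^1 \times D$ as a perturbative series by number of loops, the object of interest is the two-loop coefficient, which collects the connected diagrams built from the cubic Chern-Simons vertex. With trivalent vertices the relevant topologies are the theta graph (two bulk vertices joined by three internal propagators) together with the diagrams whose external legs are pinned to the boundary torus. First I would fix the propagator on the solid torus in the polarizations $\calP$ and $\calP'$ induced by $\g = V \oplus W$. Using the product geometry $S^1 \times D$, the propagator factorizes into a propagator along the circle --- producing the function $\eta_{S^1}$ of \eqref{eq:circleprop} --- and a propagator on the disk adapted to the chosen boundary condition; expanding each factor in Fourier modes on $S^1$ reduces every diagram to a sum over $n \in \Z$ weighted by the disk kernel.

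Next I would implement the BV-BFV gluing. The glued state is the pairing $\langle \psi_1, \psi_2\rangle$ obtained by integrating over the boundary fields on the common torus, which is well defined precisely because $\calP$ and $\calP'$ are transverse (as $V$ and $W$ are complementary maximal isotropics). The gluing diffeomorphism $\varphi = \left(\begin{smallmatrix} m & p\\ n & q\end{smallmatrix}\right)$ enters through its pullback on boundary fields: it identifies the Fourier mode labelled $n$ on one boundary circle with a mode determined by the matrix on the other. Carrying the pairing to two loops, the two-loop coefficient of the glued manifold is a finite sum of the bulk theta contributions from each torus plus the cross terms generated by contracting the boundary legs of $\psi_1$ against those of $\psi_2$ through $\varphi$.

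After the pairing, every surviving diagram is expressed as a mode sum over $\Z$ that collapses, via the periodicity forced by the matrix entry $p$, to a sum over residues $k \bmod p$ with phases carrying $q$ and $m$. The characteristic cotangent sums $\sum_{k=1}^{p-1}\cot(\pi k/p)\cot(\pi q k/p)$ are exactly the Dedekind sum $s(q,p)$ of \eqref{eq:defDedekindSum}, accounting for the leading term in both $w_2^{MT}$ and $w_2^{NMT}$, while the linear-in-$k$ remainders assemble into the $\tfrac{q+m}{12p}$ correction. The split between the Manin-triple and the general case is then purely algebraic: the failure of $V$ and $W$ to close under the bracket is measured by structure constants whose contractions populate the extra cross terms; when $V$ and $W$ are subalgebras these vanish, leaving only the contraction $e(\g)$ of \eqref{eq:defecc}, whereas in general they survive and assemble into the coefficient $e'(\g)$ of \eqref{eq:defecc2} multiplying the additional mode sums built from $\eta_{S^1}$ and the function $f$ of \eqref{eq:def_f}.

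The hard part will be the loop itself. Because the propagator is distributional and does not extend to the compactified configuration space, the product of two propagators meeting at a coincident point inside a two-loop diagram is a priori ill defined; this is where the \emph{ad hoc} regularization of the product must be inserted. Concretely, the regularized coincident-point value of the square of the circle propagator produces a divergent mode sum of $1/n$ truncated by the residue structure, whose finite part is the analytically continued harmonic number $H_{1/p}$. I expect the delicate points to be showing that this regularization is compatible with the gluing --- so that the pairing of two regularized states equals the regularized pairing --- and that the resulting finite expression is independent of the residual choices, thereby yielding the stated $\tfrac{q+m}{2\pi^2}H_{1/p}$ term in $w_2^{NMT}$.
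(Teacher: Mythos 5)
Your overall scaffolding --- states on the two solid tori in transverse polarizations, a BKS-type pairing across $\varphi$, and an algebraic split between the Manin-triple and general cases --- matches the paper's strategy, but two of your concrete mechanisms are wrong. First, you expect the glued two-loop coefficient to contain ``bulk theta contributions from each torus'' plus cross terms. In the paper the \emph{ad hoc} regularization declares $(\delta^{(1)}_{S^1})^2 = 0$ and $\delta^{(1)}_{S^1}\eta_{S^1} = 0$; since the horizontal propagator is $\eta_D\,\delta^{(1)}_{S^1} + \mu_1\eta_{S^1}$ and $\mu^2 = 0$, every loop whose two edges point the same way (``type A'') vanishes, and in particular the theta graph on a single solid torus is identically zero. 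The entire two-loop answer is a gluing effect: the term $\tfrac12 s(q,p)$ comes from pairing the one-point diagram $\Gamma_{1,2}^{\sfb}$ against its dual across $\varphi$ --- and because the circle propagator \emph{is} the sawtooth $((s-s'))$, the Dedekind sum appears directly in its sawtooth form \eqref{eq:defDedekindSum}, with no Fourier-mode/cotangent conversion --- while the $\tfrac{q+m}{12p}$ piece comes from the type-B loop diagrams $\Gamma_{2,1},\Gamma_{2,2}$ paired against zero-point diagrams. All of this must then pass through the reduction of the redshirt residual fields (the BV pushforward $\calV_{M_1}\oplus\calV_{M_2}\to\calV_M$, contracting $z^{+,\bbA}_2$ with $z^{2,\bbB}$ to $1/p$), a step you omit entirely and which is what eliminates most of the candidate pairings.

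Second, and more seriously, your mechanism for the $H_{1/p}$ term cannot work. You derive it as the finite part of a divergent coincident-point mode sum of the squared circle propagator. But (a) that coincident-point product is exactly what the regularization sets to zero, and (b) a coincident-point effect is blind to the choice of splitting $\g = V \oplus W$, so your mechanism would produce the irrational term in the Manin-triple case as well, contradicting $w_2^{MT} = e(\g)\bigl(\tfrac12 s(q,p) + \tfrac{q+m}{12p}\bigr)$, which contains no such term. In the paper, $H_{1/p}$ and the sums involving $f$ arise from perfectly finite boundary integrals such as $\int_{(\de M)^3}\omega_{\Gamma_{0,3}}\,(\varphi^{\times 3})^*\omega_{\Gamma_{0,3}}$, obtained by pairing the $\sfA^3$-vertex diagram $\Gamma_{1,3}$ (and its relatives) against the dual $\sfB^3$-vertex diagrams on the other torus. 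These diagrams exist only when $V$ or $W$ fails to close under the bracket, which is precisely why the extra contribution carries the coefficient $e'(\g) = g_{ijk}h^{ijk}$ of \eqref{eq:defecc2} and is absent in the Manin-triple case. Without locating the irrational term in these polarization-dependent gluing integrals, the dichotomy asserted in the theorem cannot be established.
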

We would like to emphasize that the result depends both on the choice of gluing diffeomorphism and splitting of $\g$. The dependence on the gluing diffeomorphism is to be expected: A change in the gluing diffeomorphism induces a change of framing of the solid torus, and hence the glued lens space. $w_2^{MT}$ changes in the expected way under such a change of framing, namely by $\frac{1}{12}$, if the framing is changed by one unit.  However, the dependence on the polarization is unexpected and has not been previously observed in the literature. These issues will be discussed in more detail in Sections \ref{sec:Comparing} and \ref{sec:Conclusion}.

\subsection{Plan of the paper}
 Sections \ref{sec:CS} and \ref{sec:BVBFV} contain a brief review of Chern-Simons theory and the BV-BFV formalism. 
 Since lens spaces admit a Heegaard splitting of genus 1, it is enough to compute the state - to the necessary order - on a solid torus. This is done in Section \ref{sec:SolidTorus}. In order to treat Chern-Simons as a perturbation of abelian BF theory, we make the assumption that the Lie algebra of coefficients admits a splitting into Lagrangian subspaces with respect to the invariant form used in the definition of the Chern-Simons form, see \ref{sec:SplitCS}. This determines a polarization on the space of boundary fields. As a gauge fixing we choose the axial gauge propagator. We then compute the effective action is Section \ref{sec:eff_action_torus}. Since the axial gauge propagator does not extend to a compactified configuration space, a different regularization has to be chosen for loop diagrams. In this note we restrict ourselves to a naive ad hoc regularization discussed in \ref{loops}. This issue is addressed in more detail in \cite{Wernli2018}. \\ 
In Section \ref{sec:Gluing} we explain how to compute the pairing of the states on two solid tori. In Section \ref{sec:EffectiveAction} we present the result for the effective action.  Remarkably, the result - both for the state on the solid torus and the weight of the Theta graph - depends on whether the subspaces are subalgebras or not. This seems similar to other results in the literature (\cite{Weinstein1987, Hawkins2008}) where such dependence on the polarization is discussed. If both subspaces are subalgebras, the result can be compared with the results by Kuperberg-Thurston-Lescop (see Section \ref{sec:Comparing}). Finally, in Section \ref{sec:Conclusion}, we present some conclusions and discuss further questions and related work. \\ 

\section{Perturbative Quantization of Chern-Simons theory}
\label{sec:CS}
We briefly review the idea behind perturbative quantization of Chern-Simons theory. 
\subsection{Chern-Simons Theory}
Chern-Simons is an example of a topological field theory that after the seminal paper by Witten (\cite{Witten1989}) received widespread attention throughout the mathematical physics and topology communities. It is a field theory defined on 3-dimensional manifolds with the space of fields $F_M$ the space of connections on the trivial principal $G$-bundle over $M$: 
\begin{equation}
F_M = Conn(M \times G) \cong \Omega^1(M,\g).
\end{equation}
Here $G$ is a Lie group and $\g$ its Lie algebra, subject to assumptions depending on the context. We identify the space of connections with $\g$-valued 1-forms on $M$. The action functional is \begin{equation}
S_M[A] = \int_M \frac{1}{2}\langle A, \dr A \rangle + \frac16 \langle A, [A, A] \rangle. \label{eq:CSaction}
\end{equation}
Here $\langle \cdot , \cdot \rangle$ is an invariant symmetric bilinear form on $\g$. The action \eqref{eq:CSaction} is the integral of the Chern-Simons form defined in \cite{Chern1974}, whence the name. 
An important feature of Chern-Simons theory is its gauge invariance under infinitesimal gauge transformations\footnote{Only the exponential of the Chern-Simons action, with the correct normalization, is invariant under large gauge transformations, but this is irrelevant for the perturbative treatment in this paper.} $A \to \dr_Ac$, where $c \in C^{\infty}(M,\g)$ (if $\de M = \emptyset$ or appropriate boundary conditions are chosen). Here $\dr_A c = \dr c + [A,c]$.  One can extend the theory to non-trivial bundles, but this requires more work. A thorough discussion of classical Chern-Simons theory can be found in the review papers by Freed,  \cite{Freed1995} for the case of trivializable bundles and \cite{Freed2002} for the nontrivializable case. 

\subsection{Perturbative quantization of gauge theories}
We remain very brief in this subsection. The interested reader is invited to study the references \cite{Reshetikhin2010, Mnev2017} for a thorough introduction to the perturbative quantization of gauge theories from the mathematical viewpoint, or the short introductory paper \cite{Polyak2005} (which is devoted exclusively to Chern-Simons theory).   \\ 

The quantity of interest in this paper is the Chern-Simons partition function $Z_M$. Its naive definition would be 
\[ Z_M = \int_{F_M}e^{\frac{i}{\hbar}S_M}, \]
but attempts at defining an appropriate measure on $F_M$ have failed (see \cite{Glimm1987} for a discussion of measures on field spaces in quantum field theory). In perturbative quantization, one tries to define $Z_M$ by extrapolating the behavior of a finite-dimensional oscillatory integral $Z = \int_F e^{\frac{i}{\hbar}S}$ in the $\hbar \to 0$ limit. It is well known that if $S$ has non-degenerate critical points, then the integral concentrates in a neighbourhood of them, and one can derive a series in powers of $\hbar$ describing the asymptotic behaviour. 
One can mimick the definition of this power series in the infinite-dimensional case if the critical points of $S_M$ are non-degenerate. However, the critical points of functionals invariant under a symmetry, such as the Chern-Simons functional, are never non-degenerate, so one needs an additional method from physics, called \emph{gauge fixing}. There are different variants of this method, the most commonly used being the Faddeev-Popov (FP) ghosts (\cite{Faddeev1967}) and the BRST formalism\footnote{Sometimes also BRS formalism, it was introduced independently by Becchi, Rouet and Stora (\cite{Becchi1975, Becchi1976}) and Tyutin (\cite{Tyutin1976}).}. The idea is to embed the space of fields $F_M$ in the degree 0 part of a graded vector space\footnote{In other examples the space of fields can be non-linear, but we will only deal with the linear case, which is also easier in view of quantization.} $\calF_M$, and define a new functional $\calS_M \colon \calF_M \to \R$ such that $\calS_M$ has non-degenerate critical points, and $\calS_M|_{F_M} = S_M$. \\ 
Both the FP ghosts and the BRST formalism are subsumed in the BV formalism named after Batalin and Vilkovisky, who introduced it in \cite{Batalin1977, Batalin1981, Batalin1983}. We will briefly discuss the BV formalism and its adaptation to the case with boundary, the BV-BFV formalism, in the next section.
\subsection{Perturbative Chern-Simons theory and invariants of 3-manifolds and links}
There is a considerable number of invariants of 3-manifolds or rational homology spheres or possibly knots and links in 3-manifolds that are related or supposedly related to perturbative Chern-Simons theory, including the Kontsevich integral \cite{Froehlich1989, Kontsevich1993, Bar-Natan1995}, the Le-Murakami-Ohtsuki invariant \cite{Le1998}, the Aarhus integral \cite{Bar-Natan2002}, the Kontsevich-Kuperberg-Thurston-Lescop  invariant of rational homology spheres\cite{Kontsevich1994, Kuperberg1999, Lescop2004, Lescop2004a}, the Axelrod-Singer invariants \cite{Axelrod1991, Axelrod1994} associated to acyclc flat connections, the Bott-Cattaneo \cite{Bott1998, Bott1999} invariants of rational homology spheres, the effective action approach by the first two authors (\cite{Cattaneo2008}) and of course the conjectural asymptotic expansion of the Reshetikhin-Turaev invariant \cite{Reshetikhin1991}. The authors hope that the methods of cutting and gluing diagrams introduced in \cite{Cattaneo2017}, and used in this paper to compute weights of theta graphs, can help to shed light on the various known and conjectured relations between these invariants in the future. 
\section{Review of BV-BFV quantization}
\label{sec:BVBFV}

The BV formalism is generally considered as the most powerful gauge-fixing formalism, as it can deal with a large class of gauge theories, especially also those where the gauge symmetries do not close off-shell. Detailed introductions can be found in \cite{Mnev2008, Mnev2017, Fiorenza2003}. In view of functorial quantization, an important fact is that it admits a natural extension to manifolds with boundary, the BV-BFV formalism, that is compatible with cutting and gluing both at the classical level and the level of perturbative quantization (\cite{Cattaneo2014, Cattaneo2017}). We briefly review the main concepts. For details and proofs we refer to the references above.
\subsection{BV formalism} 
In the BV formalism one embeds the space of states $F_M$ into an \emph{odd symplectic} $\Z$-graded vector space $(\calF_M,\omega)$, where the odd symplectic form $\omega$ is required to have $\Z$-degree $-1$. The $\Z$-degree is referred to as \emph{ghost number}. One needs to find a BV  action $\calS_M$ such that $\restr{\calS_M}{F_M}=S_M$, which satisifies the Classical Master Equation (CME) $(\calS,\calS)=0$, where $(\cdot,\cdot)$ is the odd Poisson bracket associated to $\omega$.  Notice that if $\calS_M$ has a hamiltonian vector field $\calQ_M$ then $\calQ_M^2 = 0$, from the CME. This leads to the following definition of BV theory (\cite{Cattaneo2014}): 
\begin{defn}
A \emph{BV vector space} is a quadruple $(\calF,\omega,\calQ,\calS)$ where $\calF$ is a $\Z$-graded vector space, $\omega$ is a symplectic form of degree $-1$, $\calQ$ is a degree $+1$ vector field, and $\calS$ is a function of degree 0, such that $\calQ^2 =0$ and 
\begin{equation}
\iota_\calQ\omega = \delta \calS. \label{eq:Hamiltonian}
\end{equation}
A \emph{$d$-dimensional (linear) BV theory} is an association of a $BV$ vector space $(\calF_M,\omega_M,\calQ_M,\calS_M)$ to every $d$-dimensional manifold $M$.
\end{defn}
Here the de Rham differential on $\calF$ is denoted by $\delta$. Equation \eqref{eq:Hamiltonian} says that $\calQ$ is the Hamiltonian vector field of $\calS$ and together with $\calQ^2 = 0$ this implies the CME. We will only be interested in local theories. Roughly this means that $\calF_M$ is given by sections of a sheaf over $M$, and the other objects are given by integrals of functions of jets of the fields. \\ 
In order to quantize one requires also the Quantum Master Equation (QME) \begin{equation}\label{eq:QME}
\Delta(e^{\frac{i}{\hbar}\calS}) = 0 \Leftrightarrow \frac{1}{2}(\calS,\calS) + i\hbar\Delta \calS = 0.
\end{equation}
Here $\Delta$ is the BV Laplacian associated to $\omega$, which in Darboux coordinates $(x^i,p_i)$ is given by $\sum_i \pm\frac{\de}{\de x^i}\frac{\de}{\de p_i}$\footnote{As always, the formalism is developed for finite-dimensional $F_M$, and then needs to adapted to the infinite-dimensional case.}\footnote{In this paper we completely ignore the (important) distinction between functions and half-densities, see e.g. \cite{Khudaverdian2004}.}.
To define the partition function $\psi_M$ one needs to choose a Lagrangian $\calL_M \subset \calF_M$, such that the partition function restricted to that Lagrangian has a unique critical point. This is the choice of gauge fixing. The partition function is then defined by the BV integral \begin{equation}\label{eq:BVintegral}
\psi_M = \int_{\calL_M}e^{\frac{i}{\hbar}\calS}.
\end{equation}
Under the assumption that the QME is satisfied, the partition function $\psi_M$ defined by \eqref{eq:BVintegral}  is invariant under deformations of the gauge-fixing Lagrangian $\calL_M$. \\ 
In the infinite-dimensional case usually at hand in quantum field theory, the BV Laplacian is ill-defined, and has to be regularized, whereas the BV integral \eqref{eq:BVintegral} has to be replaced by its formal asymptotic version, which yields a sum over Feynman graphs. 
\subsubsection{Effective Action and Residual fields}
In certain cases, it is not possible to find directly a Lagrangian which satisfies the requirement that the action restricted to it  has a unique critical point. A good example is the case of abelian BF theory. In $d$ dimensions, the BV space of fields is $\calF_M = \Omega^\bullet(M)[1] \oplus \Omega^\bullet(M)[d-2]$, and the BV action functional is 
\begin{equation}
\calS_M[\sfA,\sfB] = \int_M \sfB \wedge \dr\sfA.
\end{equation}
The critical points are given by closed forms, and the gauge symmetries are given by shifting $\sfA,\sfB$ by exact forms. Hence if the de Rham cohomology $H^\bullet(M)$ is nontrivial, there are critical points which are inequivalent under gauge transformations, also known as zero modes. \\
The solution to this problem is to choose a BV space of residual fields $\calV_M$ and a splitting $F_M  = \calV_M \times \calY$, such that one can find a gauge-fixing Lagrangian $\calL_M \subset \calY$. Elements of $\calV_M$ are known as residual fields, zero modes, infrared fields, or slow fields, whereas the elements of $\calY$ are called fluctuations, fast fields, or ultraviolet fields. The partition function $\psi_M$ gets replaced by an \emph{effective action} $\psi_M(\mathsf{x})$, which is a function of the residual fields and formally defined via BV pushforward: 
\begin{equation}
\psi_M(\mathsf{x}) = \int_{\xi \in \calL_M \subset \calY}e^{\frac{i}{\hbar}\calS[\mathsf{x},\xi]}.\label{eq:state_residualfields}
\end{equation}
In the case of abelian BF theory one can choose as residual fields representatives of the cohomology: $\calV_M = H^\bullet(M)[1] \oplus H^\bullet(M)[d-2]$. One way to do this is to pick a Riemannian metric and use the harmonic representatives, a possible choice of gauge-fixing Lagrangian is then given by $\dr^*$-exact forms. In the finite-dimensional case, the QME for the BV action implies that the effective action is $\Delta_{\calV_M}$-closed, i.e. closed with respect to the BV Laplacian on residual fields, and changes by a $\Delta$-exact term under a deformation of the gauge fixing Lagrangian. 
\subsection{BV-BFV extension on manifolds with boundary} 
A crucial feature of the BV formalism is that it admits an extension to manifolds that is compatible with cutting and gluing, in the form of the BV-BFV formalism. The classical case is discussed in \cite{Cattaneo2014}. An introduction can also be found in \cite{Schiavina2015}.  The perturbative quantization was introduced in \cite{Cattaneo2017}. Further discussions of the gauge theories on manifolds with boundary both at classical and quantum level can be found in \cite{Cattaneo2011,Cattaneo2015a}. For a brief review see also \cite{Cattaneo2016, Cattaneo2018a}.
 We briefly review the main notions in this subsection.  
\subsubsection{Classical case}
First one needs the definition of BFV vector space (for background on the BFV complex see \cite{Stasheff1997, Schaetz2010}). 
\begin{defn}
A BFV vector space is a triple $(\calF^\de,\omega^\de,\calQ^\de)$, where $\calF^\de$ is a $\Z$-graded vector space, $\omega^\de$ is a symplectic form of degree $0$ on $\calF^\de$, and $\calQ^\de$ is a degree $+1$ vector field which is symplectic and satisfies $(\calQ^\de)^2=0$. 
\end{defn} 
By degree reasons $\calQ^\de$ is automatically Hamiltonian with Hamiltonian function $\calS^\de$.\\ 
Now the idea is to associate to the boundary of a manifold a BFV vector space, and to the bulk a suitable generalization of a BV vector space which is compatible with the BFV data associated to the boundary. The solution is the notion of BV-BFV vector space as introduced in \cite{Cattaneo2014}. 
\begin{defn}
Let $\calF^\de = (\calF^\de,\omega^\de= \delta \alpha^\de,\calQ^\de)$ be a BFV vector space with exact symplectic form $\omega^\de$. A BV-BFV vector space over $\calF^\de$ is a quintuple 
$(\calF, \omega,\calQ,\calS,\pi)$, where $\calF$ is a $\Z$-graded vector space, $\omega$ is a degree $-1$ symplectic form, $\calQ$ is a degree $+1$ vector field, $\calS$ is a degree 0 function on $\calF$ and $\pi \colon \calF \to \calF^\de$ is a surjective submersion such that $\calQ^2 = 0$, $\delta\pi \calQ = \calQ^\de$ and 
\begin{equation}
\iota_Q\omega = \delta S + \pi^* \alpha^\de. \label{eq:mCME}
\end{equation}
\end{defn}
\begin{rem}
Equation \eqref{eq:mCME} should be thought of as a generalization of the CME. In fact, in the case $\calF^\de = \{0\}$ the definition of BV-BFV vector space reduces to that of an ordinary BV vector space. 
\end{rem}

We are now ready to define the notion of classical BV-BFV theory.  Namely, a $d$-dimensional BV-BFV theory associates to closed $(d-1)$-dimensional manifold $\Sigma$ an exact BFV vector space $\calF^\de_\Sigma$ and to a $d$-dimensional manifold $M$ with boundary $\de M$ a BV-BFV vector space $\calF_M$ over the BFV vector space $\calF^\de_{\de M}$. Some remarks are in order: 
\begin{rem}
\begin{enumerate}[i)]
\item The exactness condition (like the linearity condition) can be relaxed. See \cite[Remark 3.3]{Cattaneo2014} for a short discussion. 
\item We also require locality from BV-BFV theories. That means the vector spaces $\calF, \calF^\de$ are given by sections of a sheaf and as such are typically infinite-dimensional (over $\R$ or $\C$). They can be equipped with natural Banach or Fr\'echet topologies depending on the situation. 
\item With enough care, a BV-BFV theory yields a functor from a cobordism category (perhaps equipped with extra structure) to a category of vector spaces where composition is given by (homotopy) fibered product. Again, we refer to \cite[Section 4]{Cattaneo2014} for a more detailed discussion. 
\end{enumerate}
\end{rem}
\subsubsection{Quantization}
Classical BV-BFV theories admit a perturbative quantization that is compatible with cutting and gluing (\cite{Cattaneo2017}). Roughly, the idea is to combine geometric quantization of $(\calF^\de,\omega)$ and perturbative quantization as in Equation \eqref{eq:state_residualfields}. We briefly outline the main steps. 
\begin{enumerate}[i)]
\item Choose a polarization $\calP$ on $(\calF^\de,\omega)$ with smooth leaf space $\calB^\calP$. In our case it will be enough to find a splitting  $\calF^\de = \calB_1^\calP \times \calB_2^\calP$, where both $\calB_1^\calP$ and $\calB_2^\calP$ are Lagrangian subspaces of $\calF^\de$. One can choose either $\calB_i^\calP$ as base space or fibers of the polarization respectively.
\item If necessary, change $\alpha^\de$ by an exact term such that it vanishes on the fibers of $\calP$. To preserve Equation \eqref{eq:mCME} one has to change $\calS$ by a boundary term. 
\item Chooose a section $\sigma$ of $\calF \to \calF^\de \to \calB^\calP$, and a splitting $\calF = \sigma(\calB^\calP) \times \calY$ (subject to certain requirements discussed in \cite{Cattaneo2017}). We immediately suppress $\sigma$ from the notation. 
\item Then, proceed to split $\calY = \calV \times \calY'$ such that there is a Lagrangian $\calL \subset \calY'$ on which the action $\calS$ has a unique critical point. We now have a splitting $\calF = \calB^\calP \times \calV \times \calY'$, and accordingly we write $\mathsf{X} = \mathbb{X}+\mathsf{x} + \xi$ for $\mathsf{X}\in \calF$.
\item Finally, define the state or partition function formally by 
\begin{equation}
\psi( \mathbb{X},\mathsf{x}) = \int_{\xi \in \calL_M \subset \calY}e^{\frac{i}{\hbar}\calS[\mathbb{X},\mathsf{x}, \xi]}.\label{eq:state_residualandboundaryfields}
\end{equation}
\end{enumerate}
A short discussion is in order. \\
In the finite-dimensional case, Equation \eqref{eq:state_residualandboundaryfields} is an example of a ``BV pushforward in families'' introduced in \cite{Cattaneo2017}. In the infinite-dimensional case, definition \eqref{eq:state_residualandboundaryfields} has to be interpreted via the Feynman graphs and rules discussed in \cite{Cattaneo2017} and \cite{Cattaneo2017a}. We will briefly review the ones relevant for this paper below. \\
The state \eqref{eq:state_residualandboundaryfields} is a  functional on both $\calV$ and $\calB^\calP$. We think of it as an element of $\widehat{\calH}^\calP = \widehat{S}\calV^* \otimes \calH^\calP$, where $\calH^\calP$ is a certain space of functionals on $\calB^\calP$ that should be thought of as a geometric quantization of $(\calF^\de,\omega)$, and $\widehat{S}$ denotes the formal completion of the symmetric algebra. Also the boundary action $S^\de$ can be quantized and yields a coboundary operator $\Omega^\calP$ on $\calH^\calP$. The precise construction of this space and the coboundary operator $\Omega$ are not relevant for this paper, the interested reader is again referred to  \cite[Section 4.1]{Cattaneo2017}. Also $\calV$ carries a coboundary operator, the BV Laplacian $\Delta_\calV$, and the state is a cocycle in the bicomplex $\calH^\calP$: 
\begin{equation}
(\hbar^2\Delta_\calV + \Omega^\calP)\psi = 0. \label{eq:mQME}
\end{equation}
Equation \eqref{eq:mQME} is called the modified Quantum Master Equation (mQME). \\ 
An important feature of the quantum BV-BFV formalism is that the perturbative expansions associated to manifolds with boundary can be glued together using a form of the BKS\footnote{For Blattner, Kostant, Sternberg. See \cite{Bates1997}} pairing discussed in \cite{Cattaneo2017}. The gluing procedure is important to this paper and we will discuss it in more detail below in Section \ref{sec:Glueing}. 
\section{Split Chern-Simons Theory on the solid torus}
We now apply the BV-BFV formalism to the case of split Chern-Simons theory, in the example of the solid torus. Partly this example was already discussed in \cite{Cattaneo2017a}. In this paper we use the axial gauge fixing to explicitly compute the Feynman diagrams in low orders on the solid torus and, via gluing, also on lens spaces.
\subsection{Split Chern-Simons Theory}\label{sec:SplitCS}
Split Chern-Simons theory as an example of a perturbation of abelian BF theory was first considered in \cite{Cattaneo2017} and investigated more thoroughly in \cite{Cattaneo2017a}, see also \cite{Wernli2018}. Let $\g$ be a Lie algebra with  symmetric ad-invariant bilinear form $\langle\cdot , \cdot\rangle$. Consider the BV-extended Chern-Simons action functional 
$$ \calS[\sfC] = \int_M \frac{1}{2}\left\langle \sfC,\dr\sfC \right\rangle + \frac{1}{6}\left\langle \sfC,\left[\sfC,\sfC\right]\right\rangle = \int_M\frac{1}{2} B_{ab}\sfC^a\sfC^b + \frac{1}{6}f_{abc}\sfC^a\sfC^b\sfC^c $$
where $\sfC \in \calF_M = \Omega^{\bullet}(M,\g)[1]$ is the superfield, i.e. an inhomogeneous Lie algebra-valued differential form, and $\langle \cdot , \cdot \rangle$ and $[\cdot,\cdot ]$ denote extensions of the bilinear form and the Lie bracket to Lie algebra-valued differential forms. In the second expression we pick a basis $e_a$ of $\g$ and define $B_{ab} = \langle e_a, e_b \rangle$, $f_{abc} = \langle e_a, [e_b,e_c] \rangle$ and expand the field as $\sfC = \sfC^ae_a$. In \cite{Cattaneo2014} it is shown that this is an example of a BV-BFV theory.  \\
Now assume that $\g$ admits a splitting $\g = V \oplus W$ into maximal isotropic subspaces. We choose a basis $\xi_i$ of $V$ and a dual basis $\xi^i$ of $W$. Then the space of fields splits as 
$\Omega^\bullet(M,\g) = \Omega^\bullet(M,V) \oplus \Omega^\bullet(M,W)$ and the superfield $\sfC$ splits as $\sfA + \sfB = \sfA^i\xi_i + \sfB_i\xi^j$. Integrating by parts one can rewrite the action as 
\begin{equation}\calS[\sfA,\sfB] = \int_M \langle \sfB, \dr \sfA \rangle + \frac{1}{6}\langle \sfA, [\sfA,\sfA] \rangle + \frac{1}{2}\langle \sfB, [\sfA,\sfA] \rangle + \frac{1}{2} \langle \sfA, [\sfB,\sfB] \rangle + \frac{1}{6} \langle \sfB, [\sfB,\sfB]\rangle \label{eq:SplitCSAction}
\end{equation}
so it becomes a ``BF-like'' theory with interaction term 
\begin{align*}\calV(\sfA,\sfB) &= \frac{1}{6}\langle \sfA, [\sfA,\sfA] \rangle + \frac{1}{2}\langle \sfB, [\sfA,\sfA] \rangle + \frac{1}{2} \langle \sfA, [\sfB,\sfB] \rangle + \frac{1}{6} \langle \sfB, [\sfB,\sfB]\rangle \\
&= \frac{1}{6}g_{ijk}\sfA^i\sfA^j\sfA^k + \frac{1}{2}g^i_{jk}\sfB_i\sfA^j\sfA^k + \frac12 h_i^{jk}\sfA^i\sfB_j\sfB_k + \frac{1}{6}h^{ijk}\sfB_i\sfB_j\sfB_k
\end{align*}
where we introduced the structure constants $g_{ijk},g^i_{jk},h_i^{jk},h^{ijk}$ defined by applying $\langle\cdot,[\cdot,\cdot]\rangle$ to the basis vectors $\xi_i,\xi^i$. If  $(\g,V,W)$ form a a quasi-Manin triple (i.e. $V$ is a subalgebra), by isotropy the interaction term simplifies to
\begin{equation}
\calV(\sfA,\sfB) =  \frac{1}{2}\langle \sfB, [\sfA,\sfA] \rangle + \frac{1}{2} \langle \sfA, [\sfB,\sfB] \rangle + \frac{1}{6} \langle \sfB, [\sfB,\sfB]\rangle.\label{eq:SplitCSquasiManinTriple}
\end{equation}
In the special case where $(\g,V,W)$ is a Manin triple (i.e. $V,W$ are subalgebras), by isotropy we get that the interaction term simplifies to
\begin{equation}
\calV(\sfA,\sfB) =  \frac{1}{2}\langle \sfB, [\sfA,\sfA] \rangle + \frac{1}{2} \langle \sfA, [\sfB,\sfB] \rangle \label{eq:SplitCSManinTriple}
\end{equation}
\label{sec:SolidTorus}
We now specialize to the case of the solid torus $S^1 \times D$. We give it coordinates $x = (t,z) \in [0,1) \times \{z \in \C | |z| \leq 1\}$. 
\begin{rem}\label{framingsolidtorus}
Notice that by choosing global coordinates we also choose a trivialization of its tangent bundle - a framing. We will fix this framing on the solid torus. However, as we glue lens spaces from solid tori, the framing of the glued lens space will depend on the gluing diffeomorphism. See also Section \ref{sec:Gluing}.
\end{rem}

\subsection{Polarization}
In split Chern-Simons theory, the space of boundary fields splits as 
\begin{equation}
\calF^\de = \Omega^\bullet(\de M, V) \oplus \Omega^\bullet(\de M, W).
\end{equation} 
By the isotropy condition this is a splitting into Lagrangian subspaces, so we can use either of them as base or fibers of the polarization. The coordinate on the base is denoted by a blackboard bold letter $\bbA$ or $\bbB$, and we speak of $\bbA$- or $\bbB$-representation respectively\footnote{If one thinks of Chern-Simons theory as an AKSZ theory, this amounts to lifting a target polarization instead of a source polarization.}. This terminology comes from the $p$- and $q$-representations in Quantum Mechanics. \\
Since $\de M \cong S^1 \times S^1$ is connected, the only choice is between the $\bbA$- or the $\bbB$-representation on $\de M$. For computations we will use the $\bbA$-representation, i.e. we will split the $\sfA$-field as 
$$ \sfA = \widehat{\sfA} + \widetilde{\bbA}$$
where $\widetilde{\bbA}$ is an extension of $\bbA = \iota^*\sfA$ to the bulk. The computations for the $\bbB$-representation are exactly the same, one just needs some care in translating the results. See Section \ref{sec:Brep}.
\subsection{Residual fields and fluctuations}
The minimal space of residual fields for this polarization is $$\calV_M = H^{\bullet}(M,\de M)\otimes V \oplus H^{\bullet}(M) \otimes W \ni (\sfa,\sfb).$$ 
As representatives of the cohomology we choose 
\begin{align*}
\chi^1 &= 1 \\ 
\chi^2 &= dt \\
\chi_1 &= \mu dt \\ 
\chi_2 &= \mu \\
\end{align*}
where $\mu = \frac{1}{2\pi i}d\bar{z}dz$ is unit volume form on the disk,  $dt$ is the coordinate differential on $S^1$ and we omit wedge symbols. Notice that 
$$\int_M \chi^i\chi_j = \int_M \chi_j\chi^i = \delta^i_j$$
i.e. $\chi^i$ is the dual basis to $\chi_j$. We can then expand 
\begin{align*}
\sfa &= z^i\chi_i \\
\sfb &= z_i^+\chi^i\\
\end{align*}
where $z^i,z_i$ are linear $V$(resp. $W$)-valued functions on $H^{\bullet}(M,\de M)$ and $H^{\bullet}(M)$. 
The fluctuations $\alpha, \beta$ are then defined by 
\begin{align*}
\widehat{\sfA} &= \alpha +  \sfa \\
\widehat{\sfB} &= \beta + \sfb
\end{align*}
Notice that the fluctuations depend on the extension of $\bbA$, whereas the backgrounds $\sfa,\sfb$ do not. As discussed in \cite{Cattaneo2015a} one now chooses a discontinuous extension of $\bbA$ which drops to zero outside the boundary, which we also denote by $\bbA$, abusing notation. 
This leads to a splitting of the action as 
\begin{equation}
\calS = \calS_0 + \calS^{back} + \calS^{source} + \calS^{int}
\end{equation}
where 
\begin{align*}
\calS_0 &= \int_M \widehat{\sfB}\widehat{\dr\sfA} \\
\calS^{back} &= \int_{\de M } \sfb\bbA \\ 
\calS^{source} &= \int_{\de M} \beta \bbA \\
 \calS^{int} &= \calV(\sfA,\sfB). 
\end{align*}

\subsection{Axial gauge propagator}\label{sec:AxialGauge}
Axial gauge propagators on product manifolds were discussed in \cite{Cattaneo2015a} (Appendix C.1), but the idea is older and goes back to works of Fr\"ohlich \cite{Froehlich1989}. \\ 
The identity these distributional propagators satisfy is  
\begin{equation}
\dr\eta_M = \delta_M^{(d)}(x_1,x_2) + (-1)^{d-1} \sum_i (-1)^{d\cdot\deg \chi_i}\pi_1^*\chi_i\pi_2^*\chi^i \label{distrprop}
\end{equation}
On the solid torus,  we have two choices for a distributional propagator.  The horizontal propagator is 
\begin{equation}
\label{etahor}
\eta^{hor}((z_1,t_1),(z_2,t_2)) = \eta_D(z_1,z_2)\delta^{(1)}_{S^1}(t_1,t_2) + \mu_1\eta_{S^1}(t_1,t_2)
\end{equation}
while the axial propagator is 
\begin{equation}
\label{etaax}
\eta^{ax}((z_1,t_1),(z_2,t_2)) = \delta^{(2)}_D(z_1,z_2)\eta_{S^1}(t_1,t_2) + \eta_{D}(z_1,z_2)(dt_1-dt_2)
\end{equation}
where $\eta_D$ and $\eta_{S^1}$ are propagators on the the disk and the circle, respectively.
\begin{lem}
These  propagators satisfy
\begin{equation}
\dr\eta = \delta_M^{(3)}(x_1,x_2) + \sum_i (-1)^{\deg \chi_i}\pi_1^*\chi_i\pi_2^*\chi^i = \delta_M^{(3)}(x_1,x_2) - \mu_1dt_1 + \mu_1dt_2.
\end{equation}
\end{lem}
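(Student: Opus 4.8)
The plan is to reduce the three-dimensional identity to the analogous propagator identities on the two factors $D$ and $S^1$ and then recombine them through the graded Leibniz rule. First I would record the building blocks obtained by specializing the defining relation \eqref{distrprop}. On the circle ($d=1$), with cohomology representatives $1$ and $dt$ (mutually dual under $\int_{S^1}\chi^i\chi_j=\delta^i_j$), the cohomology sum evaluates to $dt_2-dt_1$, so that
\begin{equation*}
\dr\eta_{S^1}=\delta^{(1)}_{S^1}(t_1,t_2)+dt_2-dt_1 .
\end{equation*}
On the disk ($d=2$), where the boundary condition of the chosen gauge leaves only the top-form correction sitting on the first factor, I would use
\begin{equation*}
\dr\eta_D=\delta^{(2)}_D(z_1,z_2)-\mu_1 .
\end{equation*}
Applying $\dr$ once more to each of these and using $\dr\mu_1=0$ and $\dr(dt_i)=0$ forces both diagonal currents to be closed, $\dr\delta^{(2)}_D=0$ and $\dr\delta^{(1)}_{S^1}=0$; these closedness statements, together with $\dr\mu=0=\dr(dt_i)$, are all the input the computation requires.

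Before treating $\dr\eta$ I would dispatch the second equality of the statement, which is pure bookkeeping: substituting $\chi_1=\mu\,dt$ (degree $3$), $\chi_2=\mu$ (degree $2$) and their duals $\chi^1=1$, $\chi^2=dt$ into $\sum_i(-1)^{\deg\chi_i}\pi_1^*\chi_i\,\pi_2^*\chi^i$ gives $(-1)^3\mu_1 dt_1+(-1)^2\mu_1 dt_2=-\mu_1 dt_1+\mu_1 dt_2$, the asserted right-hand side. For the first equality I would write $\dr=\dr_1+\dr_2$ and expand each propagator by the graded Leibniz rule. For the horizontal propagator \eqref{etahor}, closedness of $\delta^{(1)}_{S^1}$ and of $\mu_1$ kills the terms in which $\dr$ lands on those factors, leaving $(\dr\eta_D)\,\delta^{(1)}_{S^1}+\mu_1\,(\dr\eta_{S^1})$; inserting the two building blocks, the mixed terms $\mp\mu_1\,\delta^{(1)}_{S^1}$ cancel and one is left with $\delta^{(2)}_D\delta^{(1)}_{S^1}-\mu_1 dt_1+\mu_1 dt_2$.

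For the axial propagator \eqref{etaax} the same expansion, now using $\dr\delta^{(2)}_D=0$ and $\dr(dt_1-dt_2)=0$, yields $\delta^{(2)}_D\,(\dr\eta_{S^1})+(\dr\eta_D)\,(dt_1-dt_2)$; here the four terms of the form $\pm\delta^{(2)}_D dt_i$ cancel in pairs and one again obtains $\delta^{(2)}_D\delta^{(1)}_{S^1}-\mu_1 dt_1+\mu_1 dt_2$. In both cases the final step is to identify $\delta^{(2)}_D\delta^{(1)}_{S^1}$ with the full diagonal current $\delta^{(3)}_M$ via the product structure of the diagonal in $M\times M=(S^1\times D)^{\times 2}$. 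I expect the main obstacle to be exactly the distributional bookkeeping: keeping the signs of the graded Leibniz rule consistent, fixing the orientation and ordering convention so that $\delta^{(3)}_M=\delta^{(2)}_D\delta^{(1)}_{S^1}$ holds without a stray sign, and justifying that products such as $\mu_1\,\delta^{(1)}_{S^1}$ and $\delta^{(2)}_D\,\dr\eta_{S^1}$ make sense as currents — which is harmless here only because in each product the two singular factors constrain complementary sets of coordinates.
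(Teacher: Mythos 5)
Your proposal is correct and follows essentially the same route as the paper: reduce to the factorwise identities $\dr\eta_D=\delta^{(2)}_D-\mu_1$ and $\dr\eta_{S^1}=\delta^{(1)}_{S^1}-(dt_1-dt_2)$, apply the graded Leibniz rule to each propagator, and observe the cancellations leaving $\delta^{(2)}_D\delta^{(1)}_{S^1}-\mu_1 dt_1+\mu_1 dt_2=\delta^{(3)}_M-\mu_1 dt_1+\mu_1 dt_2$. The only cosmetic differences are that you obtain the closedness of the diagonal currents from $\dr^2=0$ applied to the building-block identities (the paper invokes dimensional reasons for $\dr\delta^{(1)}_{S^1}=0$) and that you spell out the bookkeeping for the cohomology sum, which the paper leaves implicit.
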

\begin{proof}
First note that in the distributional sense we have 
\begin{align*}
\dr_D\eta_D &= \delta^{(2)}_D(z_1,z_2) - \mu_1 \\
\dr_{S^1}\eta_{S^1} &= \delta^{(1)}_{S^1}(t_1,t_2) - (dt_1 - dt_2) \\
\dr_{S^1}\delta^{(1)}_{S^1} &= 0 
\end{align*}
where the first two identities are \eqref{distrprop} for $D$ and ${S^1}$ respectively, and the third is for dimensional reasons. 
Using this, we evaluate (omitting the arguments)
\begin{align*}
\dr\eta^{hor} &= \dr \eta_D \delta_{S^1} + \mu_1 \dr \eta_{S^1} \\
&= (\delta_D - \mu_1)\delta_{S^1} + \mu_1 (\delta_{S^1} - (dt_1 - dt_2) ) \\ 
&= \delta_D \delta_{S^1} - \mu_1(dt_1 - dt_2)  \\
&= \delta_M - \mu_1dt_1 + \mu_1dt_2
\end{align*}
and 
\begin{align*}
\dr\eta^{ax} &= \delta_D(\delta_{S^1} - dt_1 + dt_2) + (\delta_D-\mu_1)(dt_1 - dt_2) \\ 
&= \delta_M - \mu_1dt_1 + \mu_1dt_2. 
\end{align*}
\end{proof}
We will now state some other desirable properties the propagators have. 
\begin{prop}\label{prop:propprops}
Suppose $\int_{D_2} \eta_{D,{12}}\mu_2 = 0$, $\int_{D_2}\eta_{D,12}\eta_{D,23} = 0$ and $\int_{S^1_2}\eta_{S^1,12}dt_2=0$, $\int_{\de D_2}\eta_{D,12} =1$ Then the following identities hold for $\eta \in \{\eta^{ax},\eta^{hor}\} $: 
\begin{enumerate}
\item $\int_1 \eta_{12} = 0$, 
\item $\int_1 dt_1\eta_{12} = 0$, 
\item $\int_2 \eta_{12} dt_2 = 0$,
\item $\int_2 \eta_{12} \mu_2 = 0$,
\item $\int_2 \eta_{12} \mu_2dt_2 = 0$, 
\item $\int_2 \eta_{12} \eta_{23} = 0$, 
\item $\int_{2,\de} \eta_{12} = 1.$
\end{enumerate}
\end{prop}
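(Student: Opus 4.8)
The plan is to establish all seven identities by a single uniform mechanism. First I would substitute the explicit expressions \eqref{etahor} and \eqref{etaax} and use the product structure $M = S^1\times D$ to split every integral, by Fubini, into sums of factorized integrals over the circle slot and the disk slot. Since each of $\eta^{hor}$ and $\eta^{ax}$ is a sum of exactly two terms, each the product of a disk factor with a circle factor, every identity in the list becomes a sum of at most four such factorized pieces. I would then evaluate each factor using the four standing hypotheses together with the elementary normalizations $\int_D\mu = 1$, $\int_{S^1}dt = 1$, $\int_D\delta^{(2)}_D = 1$ and $\int_{S^1}\delta^{(1)}_{S^1} = 1$, and with form-degree counting.

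The degree count is the main workhorse. From the distributional identities in the preceding Lemma, $\eta_D$ is a form of total degree $1$ and $\eta_{S^1}$ a form of total degree $0$. Hence integrating $\eta_D$ over a full (two-dimensional) disk slot, or $\eta_{S^1}$ over a (one-dimensional) circle slot, automatically returns zero for want of the required top-degree component; this alone disposes of identity (1) entirely and of at least one piece in almost every other case. The remaining, potentially nonzero, pieces are then killed by whichever hypothesis matches the factor that would otherwise survive: the slot-$2$ circle identity $\int_{S^1_2}\eta_{S^1}\,dt_2 = 0$ handles the surviving terms of (3) and (5), the identity $\int_{D_2}\eta_{D,12}\mu_2 = 0$ handles (4) and (5), and $\int_{D_2}\eta_{D,12}\eta_{D,23} = 0$ kills the dangerous diagonal term of (6) (the other pieces of (6) vanishing by degree or by delta-composition). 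Identity (7), the only nonzero one, comes from pairing the boundary normalization $\int_{\de D_2}\eta_{D,12} = 1$ with $\int_{S^1_2}\delta^{(1)}_{S^1} = 1$ in the horizontal case, and with $\int_{S^1_2}dt_2 = 1$ in the axial case.

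Two points require care. First, identities (1), (2) and one piece of (6) integrate a propagator over its \emph{first} argument, whereas the hypotheses are phrased for the second; I would reduce these to the stated form using the exchange (anti)symmetry $\eta_D(z_1,z_2) = \pm\eta_D(z_2,z_1)$ and $\eta_{S^1}(t_1,t_2) = \pm\eta_{S^1}(t_2,t_1)$, the sign being immaterial since the target value is $0$. Second, the double-propagator identity (6) contains genuine distributional products; after integrating out the disk slot these are resolved either by the delta-composition $\int_{D_2}\delta^{(2)}_{D,23}\,\eta_{D,12} = \eta_{D,13}$ or by the hypothesis $\int_{D_2}\eta_{D,12}\eta_{D,23} = 0$, after which the accompanying circle factors vanish by degree or by the circle hypothesis.

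The hard part will be the sign and orientation bookkeeping, and it is concentrated in identity (7), which is the only one without an ``it vanishes'' safety net. There one must orient $\de M = S^1\times\de D$ consistently and track the sign carried by the term $\eta_D(z_1,z_2)(dt_1 - dt_2)$ in $\eta^{ax}$, so that both propagators land on exactly $+1$ rather than $-1$. I would therefore verify (7) last and most carefully, fixing all orientation conventions so that the horizontal and axial computations agree.
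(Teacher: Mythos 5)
Your proposal is correct and follows essentially the same route as the paper, whose entire proof reads: ``All identities are direct applications of the assumptions together with some degree counting (notice a form needs to have bidegree $(2,1)$ to contribute to the integral).'' Your write-up simply makes this explicit --- Fubini factorization into disk and circle slots, the bidegree-$(2,1)$ criterion, and matching each surviving piece to a hypothesis --- and the two subtleties you flag (using (anti)symmetry of $\eta_{S^1}$, $\eta_D$ to convert first-slot integrals into the second-slot form assumed in the hypotheses, and the sign/orientation bookkeeping in identity (7)) are real gaps in rigor that the paper's one-line proof silently glosses over, so attending to them is an improvement rather than a deviation.
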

\begin{proof}
All identities are direct applications of the assumptions together with some degree counting (notice a form needs to have bidegree (2,1) to contribute to the integral). 
\end{proof} 
\subsection{The state}
For reasons discussed below, from now on we only work with the horizontal propagator. We fix the disk and circle propagators 
\begin{align}
\eta_{S^1}(s,s') &= ((s-s')) = \begin{cases} 0 & s-s' \in \Z \\
s-s' - \lfloor s-s' \rfloor - 1/2 & s-s' \notin Z
\end{cases} \label{eq:circleprop}\\
\eta_D(z,w) &= \frac{1}{2\pi}d\mathrm{arg}(z-w) + \frac{1}{2\pi }d\mathrm{arg}(1-\bar{z}w) + \frac{1}{4\pi i }(\bar{z}dz - z d\bar{z})
\end{align} Thus we determine the BV gauge-fixing Lagrangian $\calL$ and therefore the state by the formal BV integral  
\begin{equation}
\widehat{\psi} = \int_{(\alpha,\beta)\in \calL} e^{\frac{i}{\hbar}\calS[\sfA,\sfB]}
\end{equation}
which is evaluated by Feynman graphs and rules as in \cite{Cattaneo2017,Cattaneo2017a}, which we briefly recall below. 
\subsubsection{Feynman graphs and rules} In this case the Feynman graphs and rules are as follows. Admissible graphs are directed connected graphs with a trivalent and univalent vertices. The trivalent vertices can have any number (between 0 and 3) of outgoing and incoming half-edges. Outgoing half-edges represent $\sfA$ fields while incoming half-edges represent $\sfB$ fields.  Univalent vertices can have only incoming half-edges and are decorated by a boundary field $\bbA$. See figure \ref{fig:vertices}. Incoming half-edges must either be connected to an outgoing half-edge or a $\sfb$ residual field (a half-edge ending in a residual field is thought of as a decoration of the vertex incident to the half-edge). Outgoing half-edges must either be connected to an incoming half-edge or an $\sfa$ residual field.
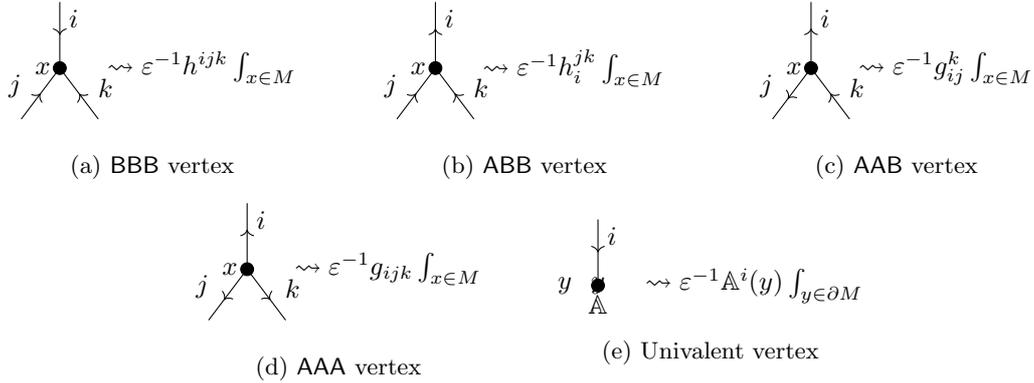
\begin{figure}[h]
\centering
\begin{subfigure}{0.3\textwidth}
\centering
\begin{tikzpicture}[scale=2]
\node[shape=coordinate,label=left:{$x$}] (O) at (0,0) {};
\node  at (0,0.5)  {}
edge[middlearrow={>}, "$i$" near start] (O);
\node  at (-0.3,-0.4) {}
edge[middlearrow={>},"$j$" near start] (O);
\node  at (0.3,-0.4)  {}
edge[middlearrow={>},"$k$"' near start] (O);
  \tkzDrawPoint[color=black,fill=black,size=12](O)
  \node[shape=coordinate, label=right:{$\leadsto\varepsilon^{-1} h^{ijk}\int_{x \in M}$}] at (0.25,0) {};
  \end{tikzpicture}
    \caption{$\sfB\sfB\sfB$ vertex}
  \end{subfigure}
  \begin{subfigure}{0.3\textwidth}
  \centering
  \begin{tikzpicture}[scale=2]
\node[shape=coordinate,label=left:{$x$}] (O) at (0,0) {};
\node  at (0,0.5)  {}
edge[middlearrow={<}, "$i$" near start] (O);
\node  at (-0.3,-0.4) {}
edge[middlearrow={>},"$j$" near start] (O);
\node  at (0.3,-0.4)  {}
edge[middlearrow={>},"$k$"' near start] (O);
  \tkzDrawPoint[color=black,fill=black,size=12](O)
  \node[shape=coordinate, label=right:{$\leadsto \varepsilon^{-1}h_i^{jk}\int_{x \in M}$}] at (0.25,0) {};
  \end{tikzpicture}
  \caption{$\sfA\sfB\sfB$ vertex}
  \end{subfigure}  
 \begin{subfigure}{0.3\textwidth}
 \centering
\begin{tikzpicture}[scale=2]
\node[shape=coordinate,label=left:{$x$}] (O) at (0,0) {};
\node  at (0,0.5)  {}
edge[middlearrow={<}, "$i$" near start] (O);
\node  at (-0.3,-0.4) {}
edge[middlearrow={<},"$j$" near start] (O);
\node  at (0.3,-0.4)  {}
edge[middlearrow={>},"$k$"' near start] (O);
  \tkzDrawPoint[color=black,fill=black,size=12](O)
  \node[shape=coordinate, label=right:{$\leadsto \varepsilon^{-1}g_{ij}^k\int_{x \in M}$}] at (0.25,0) {};
  \end{tikzpicture}
  \caption{$\sfA\sfA\sfB$ vertex}
\end{subfigure}
\begin{subfigure}{0.3\textwidth}
\centering
\begin{tikzpicture}[scale=2]
\node[shape=coordinate,label=left:{$x$}] (O) at (0,0) {};
\node  at (0,0.5)  {}
edge[middlearrow={<}, "$i$" near start] (O);
\node  at (-0.3,-0.4) {}
edge[middlearrow={<},"$j$" near start] (O);
\node  at (0.3,-0.4)  {}
edge[middlearrow={<},"$k$"' near start] (O);
  \tkzDrawPoint[color=black,fill=black,size=12](O)
  \node[shape=coordinate, label=right:{$\leadsto \varepsilon^{-1}g_{ijk}\int_{x \in M}$}] at (0.25,0) {};
  \end{tikzpicture}
\caption{$\sfA\sfA\sfA$ vertex}
\end{subfigure}
\begin{subfigure}{0.3\textwidth}
\centering
\begin{tikzpicture}[scale=2]
\node[shape=coordinate, label=below:{$\bbA$}] (O) at (0,0) {$
\bbA$};
\node[label=left:{$y$}] at (O.east) {$y$};
\node  at (0,0.5)  {}
edge[middlearrow={>}, "$i$" near start] (O);
  \tkzDrawPoint[color=black,fill=black,size=12](O)
  \node[shape=coordinate, label=right:{$\leadsto \varepsilon^{-1}\bbA^i(y)\int_{y \in \de M}$}] at (0.25,0) {};
\end{tikzpicture}
\caption{Univalent vertex}
\end{subfigure}
\caption{Feynman graphs and rules on the solid torus: Vertices}\label{fig:vertices}
\end{figure}

A Feynman graph $\Gamma$ is evaluated as follows. First, choose dual bases $\xi_i$ of $V$ and $\xi^i$ of $W$ and expand the fields accordingly: $\sfA = \sfA^i\xi_i = \bbA^i\xi_i + \sfa^i\xi_i + \alpha^i\xi_i$, and similarly for the $\sfB$ fields. Now one labels every trivalent vertex by a point $x \in M$, every univalent vertex by a point $y \in \de M$ (for this reason trivalent vertices are called bulk vertices and univalent vertices are called boundary vertices), and all half-edges with an index $i_k$. Every half-edge also ``inherits'' the label of the vertex it is attached to. Now one computes a de Rham current as follows. For every trivalent vertex with half-edges labeled  $(i,j,k)$ one multiplies with the appropriate structure constants, see Figure \ref{fig:vertices}. If a half-edge labeled by $(x,i)$ ends in the residual field $\sfa$, one multiplies with $\sfa^i(x)$, and similarly for the other half-edges and the $\sfb$ residual fields. For an edge formed by half-edges $(x,i)$ and $(y,j)$  multiply\footnote{$\eta$ is a de Rham current and one must be very careful taking products, see the paragraph below.} with $\delta^i_j\eta(x,y)$, see Figure \ref{fig:edges}.  Finally, for a boundary vertex labeled by $(y,i)$ multiply by $\bbA^i$. \\
As for power counting, vertices come with a factor of $\varepsilon^{-1} = \frac{i}{\hbar}$, while edges carry a factor of $\varepsilon = (-i\hbar)$. 
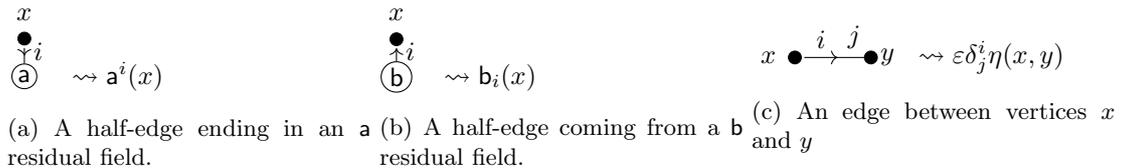
\begin{figure}[h]
\centering
\begin{subfigure}{0.3\textwidth}
\begin{tikzpicture}[scale=2]
\node[shape=circle, draw, inner sep=1pt] (O) at (0,0) {$
\sfa$};
\node[label=above:{$x$}] (v) at (0,0.25)  {} edge[middlearrow={>}, "$i$" near start] (O);
 \node[shape=coordinate, label=right:{$\leadsto \sfa^i(x)$}] at (0.25,0) {};
 \tkzDrawPoint[color=black,fill=black,size=12](v)
\end{tikzpicture}
\caption{A half-edge ending in an $\sfa$ residual field.}
\end{subfigure}
\begin{subfigure}{0.3\textwidth}
\begin{tikzpicture}[scale=2]
\node[shape=circle, draw, inner sep=1pt] (O) at (0,0) {$
\sfb$};
\node[label=above:{$x$}] (v) at (0,0.25)  {} edge[middlearrow={<}, "$i$" near start] (O);
 \node[shape=coordinate, label=right:{$\leadsto \sfb_i(x)$}] at (0.25,0) {};
 \tkzDrawPoint[color=black,fill=black,size=12](v)
\end{tikzpicture}
\caption{A half-edge coming from a $\sfb$ residual field.}
\end{subfigure}
\begin{subfigure}{0.3\textwidth}
\begin{tikzpicture}[scale=2]
\node[shape=coordinate, label=right:{$y$}] (O) at (0,0) {$
\sfa$};
\node[label=left:{$x$}] (v) at (-0.5,0)  {} edge[middlearrow={>}, "$i$" near start, "$j$" near end] (O);
 \node[shape=coordinate, label=right:{$\leadsto \varepsilon\delta^i_j\eta(x,y)$}] at (0.25,0) {};
 \tkzDrawPoint[color=black,fill=black,size=12](v)
 \tkzDrawPoint[color=black,fill=black,size=12](O)
\end{tikzpicture}
\caption{An edge between vertices $x$ and $y$}
\end{subfigure}
\caption{Feynman graphs and rules on the solid torus: Decorations and edges}\label{fig:edges}
\end{figure} In the next section we will see plenty of examples of Feynman graphs. There we depict the torus in a cross-section, placing boundary (univalent) vertices on the boundary and bulk (trivalent) vertices in the bulk. 
Notice that in the special case of the axial gauge, the weights will factor into integrals over the disk and integrals over the circle, since the residual fields and the propagator are products of forms on the disk and the circle. 
\subsubsection{Regularization}
Notice that since we work with the horizontal propagator, the integrals over the circle now contain distributions which cannot be extended to smooth forms on compactified configuration spaces as in \cite{Cattaneo2017}. Hence one needs another way to define the product of distributions arising in Feynman graphs. We will discuss this question, and the question of the correct space of states and operators to state the mQME, in more detail and generality elsewhere. A regularization that is conjecturally equivalent to the one used here, and based on approximating the axial gauge by bregular gauges, is discussed in the thesis of the third author 
\cite{Wernli2018}. 
For the purpose of this note is sufficient that to say that for tree diagrams the wavefront sets of the involved distributions are transversal and the product can be defined. The loop diagrams which are relevant for the computation of the Theta invariant will be discussed separately in section \ref{loops}. The choice of regularization is much easier for distributions on the circle, which is why we stick to the horizontal propagator. 
\section{Effective action on the solid torus}\label{sec:eff_action_torus}
We will now compute all the terms in the effective action contributing to  the two-loop contribution on a glued lens space. \\ The integrals over the bulk vertices will factorize into contributions from the disk and contributions from the circle, both of which can be computed explicitly. 
The results are presented in the following way: The contribution of a graph $\Gamma$ with $m$ univalent boundary vertices to the state is a sum of functionals on boundary fields of the form  
$$\ \psi_{\Gamma}(\bbA) = \int_{(\de M)^m}\omega_{\Gamma}\bbA_1\cdots\bbA_m = \sum_k P^k_{\Gamma}(z,z^+)_{i_1i_2\ldots i_m}\int_{(\de M)^m}\omega^k_{\Gamma}\bbA^{i_1}_1\cdots\bbA^{i_m}_m 
$$ 
where \begin{itemize}
\item $P^k_{\Gamma}(z,z^+)_{i_1\ldots i_m}$ is a product of $z$ and $z^+$ and the structure constants of the Lie algebra obtained by contracting the structure constants according to the graph, with $i_1,\ldots,i_m$ labeling the legs ending on the boundary, $k$ labels the different products of $z,z^+$ appearing, 
\item $\omega^k_{\Gamma} \colon = \pi_*(\widetilde{\omega}^k_{\Gamma})$ is a distributional form (that we call ``coefficient'') obtained by integrating the product of propagators and representatives of cohomology over the bulk points in the graph, i.e. taking the pushforward along the fibers of the map $M^{\times N} \times (\de M)^{\times m} \to (\de M)^{\times m}$ that forgets the bulk points,   

\item $\bbA^j_i$ denotes the pullback of the $\xi_j$ component of $\bbA$  under the $i$-th projection $(\de M)^m \to \de M$. 
\end{itemize} 
If we need to address individual coefficients we will usually distinguish them by form degree and denote a coefficient of form degree $p$ by $\omega^{(p)}_{\Gamma}$.
\subsection{Zero-point contribution}
By our choice of polarization, the zero-point contribution - the effective action of the free part - just consists of the term 
$$\psi_{\Gamma_0}=  -\varepsilon^{-1} \int_{\de_1M} \sfb_k\bbA^k = \varepsilon^{-1}\left(-z_{1,k}^+\int_{\de_1M}\bbA^k -z_{2,k}^+\int_{\de_1M}dt\bbA^k\right). $$
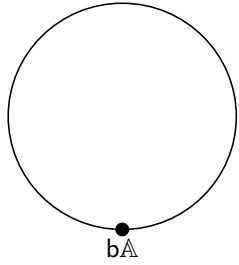
\begin{figure}[h]
\begin{tikzpicture}[scale=1]

  \node[shape=coordinate] (O) at (0,0) {};

  \node[shape=coordinate,label=below:{$\sfb\bbA$}] (bdry1) at (0,-1.5) {$\sfb\bbA$};
\tkzDrawCircle(O,bdry1)
 \tkzDrawPoints[color=black,fill=black,size=12](bdry1)
\end{tikzpicture}
\caption{Single diagram contributing to zero-point effective action}
\end{figure}
\subsection{One-point contribution} 
\subsubsection{Possible diagrams}
If we have just one bulk vertex it can carry no $\beta$'s due to the polarization. Also, notice that $\sfa^2$=0. The following labelings survive:
$\sfa\sfb\sfb,\sfb\sfb\sfb, \sfa\sfb\alpha, \sfb\sfb\alpha, \sfa\alpha\alpha,\sfb\alpha\alpha,\alpha\alpha\alpha$. One can check that upon integration the ones with two $\sfb$'s vanish for degree reasons, with the exception of $\sfa\sfb\sfb$. Let us look at the remaining terms. 
\begin{figure}[h]
\centering
\begin{subfigure}{0.3\textwidth}
\centering
\begin{tikzpicture}[scale=1]

  \node[shape=coordinate] (O) at (0,0) {};

 \tkzDefPoint(1.5,0){A}
  \node (resb) at (0,0.5) [shape=circle,above,draw,minimum size=1pt] {$\sfa$}
edge[middlearrow={<}] (O);
\node (resa) at (-0.3,-0.4) [shape=circle,below left,draw,minimum size=1pt] {$\sfb$}
edge[middlearrow={>}] (O);
\node (resa) at (0.3,-0.4) [shape=circle,below right,draw,minimum size=1pt] {$\sfb$}
edge[middlearrow={>}] (O);
  \tkzDrawCircle(O,A)
  \tkzDrawPoint[color=black,fill=black,size=12](O)
\end{tikzpicture}
\caption{$\Gamma_{1,0}$}
\end{subfigure}
\begin{subfigure}{0.3\textwidth}
\centering
\begin{tikzpicture}[scale=1]

  \node[shape=coordinate] (O) at (0,0) {};

  \node[shape=coordinate,label=below:{$\bbA$}] (bdry1) at (0,-1.5) {$\bbA$}
  edge[<-,shorten <= 1.5pt,thick] (O);
\node (resb) at (-0.5,0.5) [shape=circle,above,draw,minimum size=1pt] {$\sfb$}
edge[middlearrow={>}] (O);
\node (resa) at (0.5,0.5) [shape=circle,above,draw,minimum size=1pt] {$\sfa$}
edge[middlearrow={<}] (O);
  \tkzDrawCircle(O,A)
  \tkzDrawPoints[color=black,fill=black,size=12](O,bdry1)
\end{tikzpicture}
\caption{$\Gamma_{1,1}$}
\end{subfigure}
\begin{subfigure}{0.3\textwidth}
\centering
\begin{tikzpicture}[scale=1.5]
\node[shape=coordinate] (O) at (0,0) {};
\node (resb) at (0,0.3) [shape=circle,above,draw,minimum size=1pt] {$\sfb$}
edge[middlearrow={>}] (O);
\node[shape=coordinate,label=below:{$\bbA$}] (bdry1) at (canvas polar cs: angle=-120, radius=1cm) {$\bbA$}
edge[<-,shorten <= 1.5pt,thick] (O);
\node[shape=coordinate,label=below:{$\bbA$}] (bdry2) at (canvas polar cs: angle=-60, radius=1cm) {$\bbA$}
edge[<-,shorten <= 1.5pt,thick] (O);

\tkzDrawCircle(O,bdry1)
  \tkzDrawPoints[color=black,fill=black,size=12](O,bdry1,bdry2)

\end{tikzpicture}
\caption{$\Gamma_{1,2}^{\sfb}$}
\end{subfigure}
\begin{subfigure}{0.3\textwidth}
\centering
\begin{tikzpicture}[scale=1.5]
\node[shape=coordinate] (O) at (0,0) {};
\node (resb) at (0,0.3) [shape=circle,above,draw,minimum size=1pt] {$\sfa$}
edge[middlearrow={>}] (O);
\node[shape=coordinate,label=below:{$\bbA$}] (bdry1) at (canvas polar cs: angle=-120, radius=1cm) {$\bbA$}
edge[<-,shorten <= 1.5pt,thick] (O);
\node[shape=coordinate,label=below:{$\bbA$}] (bdry2) at (canvas polar cs: angle=-60, radius=1cm) {$\bbA$}
edge[<-,shorten <= 1.5pt,thick] (O);

\tkzDrawCircle(O,bdry1)
  \tkzDrawPoints[color=black,fill=black,size=12](O,bdry1,bdry2)

\end{tikzpicture}
\caption{$\Gamma_{1,2}^{\sfa}$}
\end{subfigure}
\begin{subfigure}{0.3\textwidth}
\centering
\begin{tikzpicture}[scale=1.5]
\node[shape=coordinate] (O) at (0,0) {};
\node[shape=coordinate,label=below:{$\bbA$}] (bdry1) at (canvas polar cs: angle=-130, radius=1cm) {$\bbA$}
edge[<-,shorten <= 1.5pt,thick] (O);
\node[shape=coordinate,label=below:{$\bbA$}] (bdry2) at (canvas polar cs: angle=-90, radius=1cm) {$\bbA$}
edge[<-,shorten <= 1.5pt,thick] (O);

\node[shape=coordinate,label=below:{$\bbA$}] (bdry3) at (canvas polar cs: angle=-50, radius=1cm) {$\bbA$}
edge[<-,shorten <= 1.5pt,thick] (O);

\tkzDrawCircle(O,bdry1)
  \tkzDrawPoints[color=black,fill=black,size=12](O,bdry1,bdry2,bdry3)

\end{tikzpicture}
\caption{$\Gamma_{1,3}$}
\end{subfigure}
\caption{Graphs in the solid torus (depicted in a cross-section) with 1 interaction vertex. A bullet denotes a point we integrate over, a long arrow denotes a propagator.}
\end{figure}
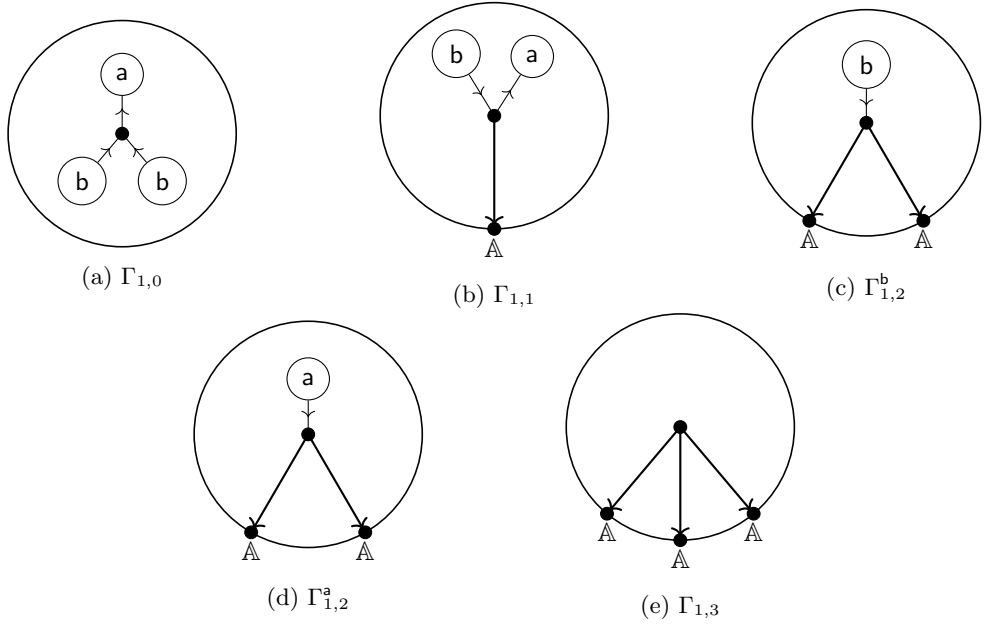
\subsubsection{$\sfb\sfa\sfa$ term}
The first one contains no propagator: 
$$\Psi_{\Gamma_{1,0}} =\frac{\varepsilon^{-1}}{2}\int_M\langle \sfa, [\sfb,\sfb]\rangle = \frac{\varepsilon^{-1}}{2}g_i^{jk}(z^{1i}z^+_{1j}z^+_{1k}+2z^{2i}z^+_{1j}z^+_{2k}) = \varepsilon^{-1}(P^1_{\Gamma_{1,0}} + P^2_{\Gamma_{1,0}})$$ 
For the other terms explicit computations of the pushforwards over bulk vertices can be found in the thesis of the third author (\cite{Wernli2018}). 
\subsubsection{$\sfb\sfa\alpha$ term} We have 
\begin{align*}
\Psi_{\Gamma_{1,1}} :&= - \varepsilon^{-1}\int_{M \times \de M}  g^i_{jk} \sfb_{1,i}\sfa_1^j\eta_{12}\bbA_2^k \\
&= -\varepsilon^{-1}g^i_{jk} (z_{1i}^+z^{1j} - z_{2i}^+z^{2j})\int_{\de M}\pi_*\left(\mu_1dt_1\eta_{12}\right))\bbA^k_2 - \varepsilon^{-1}g^i_{jk}z_{1i}^+z^{2j}\int_{\de M}\pi_*\left(\mu_1\eta_{12}\right)\bbA^k_2.
\end{align*}
\subsubsection{$\sfb\alpha\alpha$ term}
Let us turn to the next term 
\begin{align*}\Psi_{\Gamma_{1,2}^{\sfb}}:&= \frac{\varepsilon^{-1}}{2}\int_{M \times \de M \times \de M}g^i_{jk}b_{1,i}\eta_{12}\eta_{13}\bbA_2^j\bbA_3^k 
\end{align*}It evaluates to
\begin{align*}
\Psi_{\Gamma_{1,2}^{\sfb}} &=  \frac{\varepsilon^{-1}}{2}g^i_{jk}z_{1i}^+\int_{ \de M \times \de M} \pi_*\left(\eta_{12}\eta_{13}\right)\bbA_2^j\bbA_3^k + \frac{\varepsilon^{-1}}{2}g^i_{jk} z_{2i}^+\int_{ \de M \times \de M} \pi_*\left(dt_1\eta_{12}\eta_{13} \right)\bbA_2^j\bbA_3^k
\end{align*}
\subsubsection{$\sfa\alpha\alpha$ term}
The next term is 
\begin{align*}\Psi_{\Gamma_{1,2}^{\sfa}}:&= \frac{\varepsilon^{-1}}{2}\int_{M  \times \de M \times \de M}f_{ijk}a_1^i\eta_{12}\eta_{13}\bbA_2^j\bbA_3^k
\end{align*}
It evaluates to 
\begin{align*}
\Psi_{\Gamma_{1,2}^{\sfa}} &=  -\frac{\varepsilon^{-1}}{2}f_{ijk}z^{2,i}\int_{ \de M \times \de M} \pi_*\left(\mu_1\eta_{12}\eta_{13}\right)\bbA_2^j\bbA_3^k - \frac{1}{2}f_{ijk}z^{1,i}\int_{ \de M \times \de M} \pi_*\left(\mu_1dt_1\eta_{12}\eta_{13}\right)\bbA_2^j\bbA_3^k
\end{align*}
\subsubsection{$\alpha\alpha\alpha$ term}
The next term is 
\begin{align*}\Psi_{\Gamma_{1,3}}:&= \frac{1}{6}\int_{M \times \de M \times \de M \times \de M}f_{ijk}\eta_{12}\eta_{13}\eta_{14}\bbA_2^i\bbA_3^j\bbA_4^k 
\end{align*}
It evaluates to 
\begin{align*}
\Psi_{\Gamma_{1,3}} =   -\frac{1}{6}f_{ijk}\int_{ \de M \times \de M\times \de M}
\pi_*\left(\eta_{12}\eta_{13}\eta_{14}\right)\bbA_2^i\bbA_3^j\bbA_4^k 
\end{align*}
Notice the last two terms are not present in the case where $(\g,V,W)$ is a Manin triple. 
\subsection{2-point tree contribution}\label{sec:2pttrees}
In principle, tree diagrams with two points could contribute to the 2-point effective action after gluing. However, in this subsection we will argue that it is not so. The point is that in the gluing process 2-point graphs can only be paired with 0-point graphs on the other side, i.e. graphs from the free effective action. As explained below in section \ref{sec:order0pairing}, pairing against a 0-point diagram on the other side amounts to placing a linear combination of $1$, $dt$ and $d\theta$ at this point and integrating over it. The claim then follows from the following lemmata: 
\begin{lem}\label{lem:ResFieldBdryIntegration}
For $x_2 \in \de M$, we have 
\begin{align*}
\int_2 \eta_{12}dt_2 &= dt_1 \\
\int_2\eta_{12}d\theta_2 &= -\psi = \frac{z d\bar{z}-\bar{z}dz}{4\pi i}. 
\end{align*}
\end{lem}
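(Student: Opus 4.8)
The plan is to substitute the horizontal propagator $\eta = \eta^{hor}$ from \eqref{etahor} and carry out the boundary integral over $x_2 = (z_2,t_2) \in \de M \cong \de D \times S^1$ directly, treating the two summands $\eta_D(z_1,z_2)\,\delta^{(1)}_{S^1}(t_1,t_2)$ and $\mu_1\,\eta_{S^1}(t_1,t_2)$ separately. Since $\eta_D\cdot\delta^{(1)}_{S^1}$ is a product of distributions in the \emph{independent} disk and circle variables, the product is unproblematic, and the whole computation reduces to bookkeeping of which differentials can assemble into the boundary volume form $dt_2\,d\theta_2$ that a nonzero integral over the $2$-torus $\de M$ requires.

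First I would dispose of the $\mu_1\eta_{S^1}$ summand: in both integrals it carries exactly one boundary differential (either $dt_2$ or $d\theta_2$) and no mechanism to supply the other, so it cannot form $dt_2\,d\theta_2$ and drops out by degree counting. For the remaining summand I would use that $\delta^{(1)}_{S^1}(t_1,t_2)$ localizes $t_2=t_1$ and, multiplied by $dt_2$, yields $\delta_{S^1}(t_1-t_2)\,dt_1\,dt_2$. In $\int_2\eta_{12}\,dt_2$ the missing $d\theta_2$ must then come from the $z_2$-differential part of $\eta_D$; integrating that part over the boundary circle gives $\int_{\de D_2}\eta_{D,12}=1$ by Proposition \ref{prop:propprops}, while the circle delta integrates to $1$ against $dt_2$, so the two factors combine to leave precisely $dt_1$.

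For $\int_2\eta_{12}\,d\theta_2$ the roles swap. The $z_2$-part of $\eta_D$ now contributes a $d\theta_2$ that squares to zero against the extra $d\theta_2$, so only the $z_1$-differential part of $\eta_D$ survives, paired with the $-\,dt_2\,d\theta_2$ coming from $\delta^{(1)}_{S^1}\,d\theta_2$. After the delta localizes $t_2=t_1$, the answer is (up to the sign fixed by the convention for $\delta^{(1)}_{S^1}$, the same one that yields $+dt_1$ above) minus the integral over $z_2\in\de D$ of the $z_1$-part of $\eta_D$. Here I would split $\eta_D$ by its explicit formula: the two $d\arg$ kernels contribute $z_1$-forms whose coefficients are Cauchy-type kernels such as $\tfrac{1}{z_1-z_2}$ and $\tfrac{\bar z_2}{1-z_1\bar z_2}$, and a short residue (equivalently Fourier) computation shows that each integrates to zero over $z_2\in\de D$ for $|z_1|<1$. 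Only the $z_2$-independent background term $\tfrac{1}{4\pi i}(\bar z_1\,dz_1 - z_1\,d\bar z_1)=\psi$ remains, and since $\int_{\de D}d\theta_2 = 1$ it passes through unchanged, producing $-\psi=\tfrac{z_1\,d\bar z_1-\bar z_1\,dz_1}{4\pi i}$, as claimed.

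The genuinely nontrivial step is this last one: recognizing that the boundary integral of the $z_1$-part of $\eta_D$ annihilates the two angle-form kernels and isolates the background term $\psi$. I expect this vanishing, together with the sign and normalization bookkeeping (the convention $\int_{\de D}d\theta=1$ and the precise current $\delta^{(1)}_{S^1}$), to be the main place where care is needed; everything else is degree counting and the reproducing identities already recorded in Proposition \ref{prop:propprops}.
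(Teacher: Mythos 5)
Your proof is correct and takes essentially the same route as the paper's: both discard the $\mu_1\eta_{S^1}$ summand by degree counting, localize the circle delta, use $\int_{\de D,2}\eta_{D,12}=1$ to get $dt_1$ in the first identity, and invoke the residue theorem to annihilate the angle-form kernels of $\eta_D$ in the second, leaving only the background term and hence $-\psi$. The only difference is that you spell out the sign and degree bookkeeping that the paper's (rather terse) proof leaves implicit.
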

\begin{proof}
We have 
\begin{align*}
\int_2 \eta_2dt_2 &= \int_2(\eta_{D,12}\delta_{S^1,12}+\mu_1\eta_{S^1,12})dt_2 = 
\int_{\de D,2}\eta_{D,12}\int_{S^1,2}\delta_{S^1,12}dt_2 = dt_1
\end{align*}
since $\int_{\de D,2}\eta_{D,12} = 1 $ and 
\begin{align*}
\int_2 \eta_2d\theta_2 &= (\eta_{D,12}\delta_{S^1,12}+\mu_1\eta_{S^1,12})d\theta_2 = \int_{\de D,2}\eta_{D,12}d\theta_2 \\ 
&= \int_{\de D,2} 2(\phi_{12}-\psi_1)d\theta_2 = -\psi_1
\end{align*}
since $\int_{\de D,2} \phi_{12} d\theta_2 = 0$, as can easily be checked by the residue theorem. 
\end{proof}
\begin{lem}
Integrating $\eta$ against $dt,\psi$ or $\psi dt$ placed either at head or boundary vanishes. 
\end{lem}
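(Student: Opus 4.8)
The plan is, for each of the three forms, to insert it at one endpoint of the horizontal propagator, integrate over that (necessarily interior) point, and show the result vanishes identically as a form in the remaining point; since the answer is identically zero it is immaterial whether that remaining point is an interior (\emph{head}) vertex or a boundary one. Throughout I will use the product structure $\eta = \eta_D\,\delta^{(1)}_{S^1} + \mu_1\,\eta_{S^1}$ together with the fact, recalled in the proof of Proposition \ref{prop:propprops}, that only a bidegree $(2,1)$ integrand survives the integration over an interior point.

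Two of the three cases are then immediate. For $dt$ this is exactly item (2) of Proposition \ref{prop:propprops}, namely $\int_1 dt_1\,\eta_{12}=0$. For $\psi\,dt$ both summands of $\eta$ vanish by over-degree: in the $\mu_1\eta_{S^1}$ term the disk factor contains $\mu_1\wedge\psi_1$, a $3$-form on the disk, while in the $\eta_D\,\delta^{(1)}_{S^1}$ term the circle factor contains $\delta^{(1)}_{S^1}\wedge dt_1$, a $2$-form on $S^1$; hence $\int_1\eta_{12}\,\psi_1 dt_1=0$. The only genuine case is $\psi$: here the $\mu_1\eta_{S^1}$ summand again dies through $\mu_1\wedge\psi_1=0$, the circle factor $\int_{S^1_1}\delta^{(1)}_{S^1,12}=1$, and the whole integral collapses to the pure disk integral $\int_{D_1}\eta_{D,12}\wedge\psi_1$, which I must show is identically zero in $z_2$.

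To evaluate this I would write $\psi_1=\frac{r_1^2}{2\pi}\,d\theta_1$, so that wedging with $\psi_1$ annihilates every $d\theta_1$-component of $\eta_D$ and retains only the radial $dr_1$-components. The last term $\frac{1}{4\pi i}(\bar z_1\,dz_1-z_1\,d\bar z_1)$ of $\eta_D$ equals $\psi_1$ itself, so it contributes $\psi_1\wedge\psi_1=0$; it remains to treat the two $d\arg$ terms. For each of them the $\theta_1$-integral of the $dr_1$-component is $\partial_{r_1}$ of the angular average $\frac{1}{2\pi}\int_0^{2\pi}\arg f(z_1)\,d\theta_1$, and a short residue (mean-value) computation shows that for $f=z_1-z_2$ and for $f=1-\bar z_1 z_2$ this average is piecewise constant in $r_1$ on the two regions $\{r_1>|z_2|\}$ and $\{r_1<|z_2|\}$ — the enclosed pole either contributes a purely real residue or lies outside the unit contour. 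Its $r_1$-derivative therefore vanishes, giving $\int_{D_1}\eta_{D,12}\wedge\psi_1=0$ for every $z_2$, and with it the $\psi$ case at both head and boundary.

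The hard part will be precisely this angular average: $\arg(z_1-z_2)$ is multivalued, so the claim of $r_1$-independence cannot be read off termwise and must be extracted from the residue computation above, with due care at the singular locus $z_1=z_2$ (for the first term) and on $\partial D$ (for the second). Once that single disk identity is secured, the remaining bookkeeping is the routine degree counting of the first two paragraphs.
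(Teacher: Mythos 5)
Your $\psi\,dt$ case is wrong as written, and this is the one genuine flaw in the proposal. The delta form $\delta^{(1)}_{S^1}$ is not a form on a single circle: it is the one-form $\delta(t_1-t_2)\,(dt_1-dt_2)$ on $S^1\times S^1$, carrying a $dt_2$-leg as well as a $dt_1$-leg, so
\begin{equation*}
\delta^{(1)}_{S^1}\wedge dt_1=\delta(t_1-t_2)\,dt_1\wedge dt_2\neq 0,
\end{equation*}
and integrating it over $t_1$ returns $\pm dt_2$, not zero. (If $\delta^{(1)}_{S^1}\wedge dt$ really vanished, the identity $\int_2\eta_{12}\,dt_2=dt_1$ of Lemma \ref{lem:ResFieldBdryIntegration} could not hold.) Consequently the $\eta_D\,\delta^{(1)}_{S^1}$ summand does not die by degree counting; after the circle integration one is left with
\begin{equation*}
\int_{1}\eta_{12}\,\psi_1\,dt_1=\pm\Bigl(\int_{D_1}\eta_{D,12}\wedge\psi_1\Bigr)\,dt_2,
\end{equation*}
so the $\psi\,dt$ case requires exactly the disk identity $\int_{D_1}\eta_{D,12}\wedge\psi_1=0$ that you prove for the $\psi$ case. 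The gap is repairable with material you already have, but the justification you give for this case must be replaced.

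Apart from that, your route is the paper's route made explicit: the paper's own proof is a one-line reduction to the two facts that $\eta_{S^1}$ integrates to zero against $dt$ and $\eta_D$ integrates to zero against $\psi$, and it never proves the second fact. Your residue computation supplies that missing proof correctly: the $\psi_1$-term of $\eta_D$ dies as $\psi_1\wedge\psi_1$, and for the two $d\arg$ terms the multivaluedness you worry about is harmless because $\partial_{r_1}\arg f=\Im\bigl(\partial_{r_1}f/f\bigr)$ is single-valued (branches differ by constants), so the angular integral is honestly $\Im\oint_{|z_1|=r_1}\frac{\partial_{r_1}f}{f}\,\frac{dz_1}{i z_1}$, which by residues is the imaginary part of a real number ($2\pi/r_1$ or $0$ for $f=z_1-z_2$, and $0$ for $f=1-\bar z_1 z_2$). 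One smaller omission remains: $\eta$ is not symmetric in its two arguments (the $\mu_1$- and $\psi_1$-terms single out the first point), so placing the forms at the other endpoint of the propagator is a genuinely different computation, and the way the lemma is used in the two-point-tree argument requires both ends. For that you need $\int_2\eta_{12}\,dt_2=0$ (item (3) of Proposition \ref{prop:propprops}, not item (2)) and the head version $\int_{D_2}\eta_{D,12}\wedge\psi_2=0$, which holds by an analogous residue argument but is not implied by the tail version you prove.
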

\begin{proof}
This follows from the fact that $\eta_{S^1}$ (resp. $\eta_D$) vanish when integrated against $dt$ (resp. $\psi$). 
\end{proof}
Now consider  a two-point tree diagram as in figure \ref{2pttree}. It consists of a single arrow between the two points, some legs on the boundary, and maybe some residual fields. Now integrate all the legs against $dt$ or $d\theta$. The result is a graph consisting of single arrow with a product $\gamma_i$ of residual fields, $dt$'s and $d\theta$'s on both ends (figure \ref{2pttree2}). From the two Lemmata above together with Proposition \ref{prop:propprops}  it now follows that the contribution of such a graph is zero after gluing. 

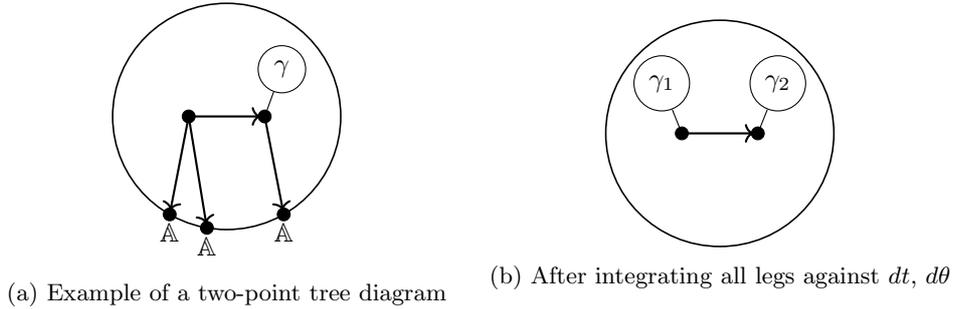
\begin{figure}[h]
\begin{subfigure}{0.4\textwidth}
\centering
\begin{tikzpicture}[scale=1]
\coordinate (O) at (0,0);
 \coordinate (bulk1) at (-0.5,0) {};
 \coordinate (bulk2) at (0.5,0.0) {}
 edge[<-, shorten <= 1.5pt,thick] (bulk1);
 \node[shape=coordinate,label=below:{$\bbA$}] (bdry1) at (canvas polar cs: angle=-120, radius=1.5cm) {$\bbA$}
edge[<-,shorten <= 1.5pt,thick] (bulk1);
 \node[shape=coordinate,label=below:{$\bbA$}] (bdry2) at (canvas polar cs: angle=-100, radius=1.5cm) {$\bbA$}
edge[<-,shorten <= 1.5pt,thick] (bulk1);
\node[shape=coordinate,label=below:{$\bbA$}] (bdry3) at (canvas polar cs: angle=-60, radius=1.5cm) {$\bbA$}
edge[<-,shorten <= 1.5pt,thick] (bulk2);
\node (resa) at (0.5,0.4) [shape=circle,above right,draw] {$\gamma$}
edge(bulk2);
\tkzDrawCircle(O,bdry1)
  \tkzDrawPoints[color=black,fill=black,size=12](bulk1,bulk2,bdry1,bdry2,bdry3)
\end{tikzpicture}
\caption{Example of a two-point tree diagram}\label{2pttree}
\end{subfigure}
\begin{subfigure}{0.4\textwidth}
\centering
\begin{tikzpicture}[scale=1]
\coordinate (O) at (0,0);
 \coordinate (bulk1) at (-0.5,0) {};
 \coordinate (bulk2) at (0.5,0.0) {}
 edge[<-, shorten <= 1.5pt,thick] (bulk1);
\node (bdry1) at (canvas polar cs: angle=-60, radius=1.5cm) {};
\node (resb) at (-0.5,0.4) [shape=circle,above left,draw] {$\gamma_1$}
edge (bulk1);
\node (resa2) at (0.5,0.4) [shape=circle,above right,draw] {$\gamma_2$}
edge (bulk2);
\tkzDrawCircle(O,bdry1)
  \tkzDrawPoints[color=black,fill=black,size=12](bulk1,bulk2)
\end{tikzpicture}
\caption{After integrating all legs against $dt$, $d\theta$}\label{2pttree2}
\end{subfigure}
\caption{Two-point tree diagrams}
\end{figure}

\subsection{Loop diagrams}\label{loops} 
At two-point order one can also see loops appearing. Loop diagrams will contain products of distributions that cannot be defined without further choices, namely, the choice of a regularization procedure. Usually one extends the propagator smoothly to a compactification of the configuration space, this can be interpreted as a sort of point-splitting regularization. For the distributional propagators at hand such a regularization is not possible, and another normalization scheme is needed.
\subsubsection{Regularization} For the purpose of this note, where we work with distributional forms on the circle, the choice of regularization is straightforward. Consider the loop given by two arrows between the two points (their directions do not matter, since the circle delta form and propagator are symmetric up to a sign). The ill-defined products of distributions that can appear are $$ \left(\delta^{(1)}_{S^1}(t_1 - t_2)\right)^2 \qquad \text{ and } \qquad \delta^{(1)}_{S^1}(t_1 - t_2)\eta_{S^1}(t_1,t_2).$$ 
The only sensible value to attach to these two terms is zero\footnote{In the thesis of the third author \cite{Wernli2018}, this is called the ``universal'' regularization of the axial gauge. Conjecturally, it can be obtained by approximating the axial gauge via Riemann-Hodge gauges.}. The first one is the square of the  one-form  $\delta^{(1)}_{S^1}(t_1 - t_2) = \delta(t_1-t_2)(dt_1-dt_2)$. For the second one, notice that the circle propagator is antisymmetric with respect to the diagonal, so its value there should be zero\footnote{Here we are thinking of the propagator as a function on $S^1 \times S^1$, so we should define it on the diagonal.}. A slightly more rigorous approach is to smear out the delta function, which will give the above results, as long as one chooses a symmetric nascent delta function. Setting the circle propagator to 0 on the diagonal means that the product with the delta distribution vanishes. These two choices are enough to evaluate all the loop diagrams at two-point order, and, in fact, at any order. 
\subsubsection{Evaluation}
Consider first a loop with both arrows pointing the same way as in figure \ref{fig:loopsame}, call this a loop of type A. The contribution of such a loop is 
\begin{align*}
(\eta_{12})^2 &= \left(\eta_{D,12}\delta^{(1)}_{S^1,12} + \mu_1\eta_{S^1,12}\right)^2 = \left(\eta_{D,12}\delta^{(1)}_{S^1,12}\right)^2 + 2 \mu_1\eta_{D,12}\delta_{S^1,12}^{(1)} + (\mu_1\eta_{S^1,12})^2 = 0
\end{align*}
by our choice of regularization above, and since $\mu^2 =0$. On the other hand, if the arrows point in opposite directions as in figure \ref{fig:loopopposite} (call this loop of type B), the loop contributes 
\begin{align*}
\eta_{12}\eta_{21} = \left(\eta_{D,12}\delta^{(1)}_{S^1,12} + \mu_1\eta_{S^1,12}\right)\left(\eta_{D,21}\delta^{(1)}_{S^1,12} + \mu_2\eta_{S^1,21}\right) = - \mu_1\mu_2\eta_{S^1,12}^2
\end{align*}
and again all other terms vanish due to the regularization. 
\begin{figure}[h]
\centering
\begin{subfigure}{0.4\textwidth}
\centering
\begin{tikzpicture}[scale=1]
\coordinate (O) at (0,0);
 \coordinate (bulk1) at (-0.75,0) {};
 \coordinate (bulk2) at (0.75,0.0) {};
 \draw[->,thick,-{Latex}] (bulk1) to[out=60,in=120] (bulk2);
 \draw[->,thick,-{Latex}] (bulk1) to[out=-60,in=-120] (bulk2);
\draw (O) circle [radius=1.5cm];
  \tkzDrawPoints[color=black,fill=black,size=12](bulk1,bulk2);
\end{tikzpicture}
\caption{Loop with both arrows directed the same way (``type A'')}\label{fig:loopsame}
\end{subfigure}
\begin{subfigure}{0.4\textwidth}
\centering
\begin{tikzpicture}[scale=1]
\coordinate (O) at (0,0);
 \coordinate (bulk1) at (-0.75,0) {};
 \coordinate (bulk2) at (0.75,0.0) {};
 \draw[->,thick,-{Latex}] (bulk1) to[out=60,in=120] (bulk2);
 \draw[->,thick,-{Latex}] (bulk2) to[out=-120,in=-60] (bulk1);
\draw (O) circle [radius=1.5cm];
  \tkzDrawPoints[color=black,fill=black,size=12](bulk1,bulk2);
\end{tikzpicture}
\caption{Loop with both arrows directed opposite ways ( ``type B'')}\label{fig:loopopposite}
\end{subfigure}
\end{figure}
In particular, with this regularization the contribution of the theta graph is zero, since it always contains a type A loop. From the contribution of the type B loop we see that if we place an $\sfa$ residual field at either end the contribution vanishes (since $\sfa \wedge \mu = 0$). So, the contributing loop diagrams contain a type B loop with a $\sfb$ residual or propagator placed at the ends. The resulting loop diagrams can be seen in figure \ref{fig:loopdiagrams}. \\
\begin{figure}[!h]
\centering
\begin{subfigure}{0.3\textwidth}
\centering
\begin{tikzpicture}[scale=1]
\coordinate (O) at (0,0);
 \coordinate (bulk1) at (-0.75,0) {};
 \coordinate (bulk2) at (0.75,0.0) {};
 \node[shape=circle,draw,above left] (res1) at (-0.6,0.4) {$\sfb$}
 edge[middlearrow={>}] (bulk1);
 \node[shape=circle,draw,above right] (res2) at (0.6,0.4) {$\sfb$}
 edge[middlearrow={>}] (bulk2);
 \draw[->,thick,-{Latex}] (bulk1) to[out=60,in=120] (bulk2);
 \draw[->,thick,-{Latex}] (bulk2) to[out=-120,in=-60] (bulk1);
\draw (O) circle [radius=1.5cm];
  \tkzDrawPoints[color=black,fill=black,size=12](bulk1,bulk2);
\end{tikzpicture}
\caption{$\Gamma_{2,0}$}\label{fig:loop1}
\end{subfigure}
\begin{subfigure}{0.3\textwidth}
\centering
\begin{tikzpicture}[scale=1]
\coordinate (O) at (0,0);
 \coordinate (bulk1) at (-0.75,0) {};
 \coordinate (bulk2) at (0.75,0.0) {};
 \node[shape=circle,draw,above left] (res1) at (-0.6,0.4) {$\sfb$}
 edge[middlearrow={>}] (bulk1);
 \node[shape=coordinate,draw,label=below:{$\bbA$}] (bdry1) at (canvas polar cs: angle=-60, radius=1.5cm) {$\bbA$} 
 edge[thick,{Latex}-] (bulk2);
 \draw[thick,-{Latex}] (bulk1) to[out=60,in=120] (bulk2);
 \draw[thick,-{Latex}] (bulk2) to[out=-120,in=-60] (bulk1);
\draw (O) circle [radius=1.5cm];
  \tkzDrawPoints[color=black,fill=black,size=12](bulk1,bulk2,bdry1);
\end{tikzpicture}
\caption{$\Gamma_{2,1}$}\label{fig:loop2}
\end{subfigure}
\begin{subfigure}{0.3\textwidth}
\centering
\begin{tikzpicture}[scale=1]
\coordinate (O) at (0,0);
 \coordinate (bulk1) at (-0.75,0) {};
 \coordinate (bulk2) at (0.75,0.0) {};
 \draw[->,thick,-{Latex}] (bulk1) to[out=60,in=120] (bulk2);
 \draw[->,thick,-{Latex}] (bulk2) to[out=-120,in=-60] (bulk1);
\draw (O) circle [radius=1.5cm];
\node[shape=coordinate,draw,label=below:{$\bbA$}] (bdry1) at (canvas polar cs: angle=-60, radius=1.5cm) {$\bbA$} 
 edge[thick,{Latex}-] (bulk2);
\node[shape=coordinate,draw,label=below:{$\bbA$}] (bdry2) at (canvas polar cs: angle=-120, radius=1.5cm) {$\bbA$} 
 edge[thick,{Latex}-] (bulk1);
  \tkzDrawPoints[color=black,fill=black,size=12](bulk1,bulk2,bdry1,bdry2);
\end{tikzpicture}
\caption{$\Gamma_{2,2}$}\label{fig:loop3}
\end{subfigure}
\caption{Loop diagrams in the 2-point effective action}\label{fig:loopdiagrams}
\end{figure}
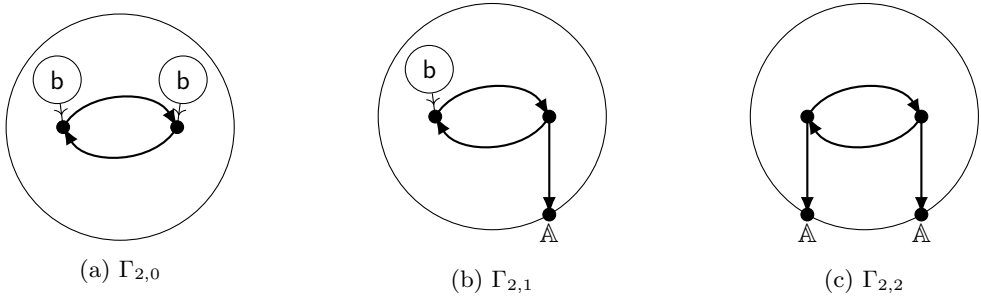

Their contributions can then be evaluated to 
\begin{align}
\psi_{\Gamma_{2,0}} &=  \frac{\varepsilon}{2}z_{2,i}^+z_{2,j}^+h^{ik}_lh^{jl}_k\pi_*(-\mu_1dt_1\mu_2dt_2\eta^2_{S^1,12}) \\
\psi_{\Gamma_{2,1}} &=  \frac{\varepsilon}{12}z_{2,i}^+h^{ij}_kg^k_{jl}\int_{\de M,1}\pi_*(\mu_1dt_1\mu_2\eta_{23}) \bbA^l_3 \\ 
\psi_{\Gamma_{2,2}} &= \frac{\varepsilon}{2}g^{k}_{ij}g^j_{kl}\int_{\de M \times \de M}\pi_*(\mu_1\mu_2\eta_{S^1,12}^2\eta_{13}\eta_{24})\bbA^i_3\bbA^l_4
\end{align}
\begin{rem}[Regularization and framing contribution] 
One can say that imposing $(\delta^{(1)}_{S^1} = 0)^2$ (alternatively, using the axial gauge) is using the blackboard framing of the solid torus. As we will discuss below, the different choices of gluing diffeomorphism leading to the same diffeomorphism type of lens spaces are related by changing the framing on one of two solid tori, and this change is reflected through a changed value of the two-loop contribution. Thus one can argue that the regularization of the theory depends on the choice of a framing, an effect that is visible in other approaches to perturbative Chern-Simons theory: In the Axelrod-Singer approach one cancels dependence on the regularization through the introduction of a framing, in the Kontsevich-Kuperberg-Thurston approach the framing defines the regularization. 
\end{rem}
\section{Gluing of lens spaces}\label{sec:Gluing}
In this section we will describe how to compute the 2-point effective action on lens spaces by applying the gluing procedure described in \cite{Cattaneo2017}. The computation is carried out in the next section.
\subsection{Lens spaces}
In this note we will apply the following conventions for lens spaces. 
Consider two solid tori $M_1 = S^1 \times D = M_2$. The boundary is $S^1 \times S^1$ with coordinates $(t,\theta) \in (\R/\Z)^2$. Pick two coprime integers $p$ and $q$. Since they are  coprime, there exist $m,n$ such that $mq-np =1$. Let $\varphi \in \mathrm{Diff}(S^1\times S^1)$ be defined by 
\begin{equation}
\label{def:varphidiffeo}
 \varphi \colon \begin{pmatrix} t \\ \theta  \end{pmatrix} \mapsto \begin{pmatrix} m & p \\ n & q \end{pmatrix} \begin{pmatrix} t \\ \theta  \end{pmatrix} = \begin{pmatrix} mt+p\theta \\ nt + q\theta  \end{pmatrix}
\end{equation} Then we define the lens space $L_{p,q}$ by 
\begin{equation}
L_{p,q} = M_1 \cup_{\varphi} M_2.\label{def:Lpq}
\end{equation}
Note that with this convention $L_{1,0} \cong S^3$ and $L_{0,1} \cong S^1 \times S^2$. 
\begin{rem}[Dependence of framing on choices]\label{rem:framinggluing}
It is well known that the diffeomorphism type of $L_{p,q}$ is independent of the choice of $m$ and $n$ and also independent of the choice of $q$ (mod $p$). Changing $q$ by a multiple of $p$ , to preserve the equation $mq-np=1$, we have to change $n$ by the same multiple of $m$.
Let 
$$ T = \begin{pmatrix} 1 & 0 \\ 1 & 1 \end{pmatrix}.$$ Then the former change corresponds to multiplying $\varphi$ with $T^k$ from the right, while the latter corresponds to multiplying with $T^k$ from the left. Since in the gluing we identify $\de M_1 \ni x \sim \varphi(x) \in \de M_2$, multiplying from the right with $T^k$ corresponds to gluing after performing $k$ Dehn twists around the longitude (given by $(t,0)$) in $\de M_2$, while the latter operation corresponds to performing $k$ \emph{inverse} Dehn twists around the longitude in $\de M_1$. We can extend Dehn twists around the longitude to the bulk of the solid torus: Using polar coordinates on the disk a possible representation\footnote{Only the isotopy class of a Dehn twist is well-defined.} is\footnote{The following formula can be extended to $r=0$ by the identity.}
$$ T \colon (t,r,\theta) \mapsto (t,r, \theta + t).$$ A Dehn twist changes the homotopy class of the framing on the solid torus by one generator. Hence both operations, i.e. shifting either $m$ or $q$ by $p$ and performing the corresponding shift of $n$, correspond to changes of framing of the resulting lens space. The first operation will change the framing of $L_{p,q}$ by $+k$ units, the second operation by $-k$ units. This is discussed in detail in \cite[Appendix B]{Freed1991}.
\end{rem}
\begin{rem}[The case $p=0$]
If $p=0$, then $qm = 1$, so we have $q = m = \pm 1$. That case needs to be considered separately: The resulting space $S^1 \times S^2$ is not a  rational homology 3-sphere, unlike all lens spaces. In the following we will always assume $p \neq 0$ unless otherwise stated. 
\end{rem}
\subsection{Gluing perturbative expansions in BV-BFV}\label{sec:Glueing}
The gluing procedure discussed in \cite{Cattaneo2017} amounts to the following prescription: 
\begin{itemize}
\item Take a diagram $\Gamma_1$ in the $\bbA$-representation and a diagram $\Gamma_2$ in the $\bbB$-representation with the same number $n$ of legs and multiply $\psi_{\Gamma_1} = \int_{(\de M_1)^n}\omega_{\Gamma_1}\bbA_1\cdots\bbA_n$ and $\psi_{\Gamma_2} = \int_{(\de M_2)^n}\omega_{\Gamma_2}\bbB_1\cdots\bbB_n$. Here the diagrams can be non connected, since the state is the exponential of the effective action. 
\item Sum over all ways of contracting $\bbA$ and $\bbB$ fields to a delta form $\epsilon\delta^{(d-1)}_{\de M}(x,\varphi(x))$ (the factor $\varepsilon$ comes from the definition via path integrals as in \cite{Cattaneo2017}).
\item Perform the integration over $(\de M_1)^n \times (\de M_2)^n$.
\item Reduce the residual fields. 
\end{itemize}
Equivalently, the state glued from $\Psi_{\Gamma_1}$ and $\Psi_{\Gamma_2}$ can be defined as follows. Let $\sigma \in S_n$ be a permutation and denote $\Phi_{\sigma} \colon (\de M)^n \to (\de M)^n$ the map defined by 
$$(x_1,\ldots,x_n) \mapsto (\varphi(x_{\sigma(1)}),\ldots,\varphi(x_{\sigma(n)})).$$
Then the above prescription results in  
$$\psi_{\Gamma_1} * \psi_{\Gamma_2} := \sum_{\sigma \in S_n} \int_{(\de M)^n} \omega_{\Gamma_1} \Phi_{\sigma}^* \omega_{\Gamma_2}.$$
In this integral only the top degree part survives. Sometimes it will be convenient to use the reformulation
\begin{equation}
\label{eq:inversegluing}
\psi_{\Gamma_1} * \psi_{\Gamma_2} = \sum_{\sigma \in S_n} \int_{(\de M)^n}  (\Phi_{\sigma}^{-1})^*( \omega_{\Gamma_1} \Phi_{\sigma}^* \omega_{\Gamma_2}) = \sum_{\sigma \in S_n} \int_{(\de M)^n}  (\Phi_{\sigma}^{-1})^*( \omega_{\Gamma_1})  \omega_{\Gamma_2}. 
\end{equation}
In all examples that we consider, the graphs are invariant with respect to permuting the boundary points. Hence, the sum is a constant times the pairing computed using $\Phi_{\mathrm{id}}$, which we will also denote $\varphi$, abusing notation.  \\
Notice also that for graphs not depending on boundary fields the gluing procedure is trivial, i.e. the corresponding contributions are simply multiplied with the rest, or, equivalently, added to the effective action. \\
\subsection{The effective action on $M_2$}\label{sec:Brep}
On $M_2$ we will choose the opposite polarization, namely, $\de M = \de_2 M$ (the $\bbB$-representation). To avoid confusion, from now on  we decorate objects with a superscript depending on which representation they are computed in, e.g. $\psi^{\bbA}$ (resp. $\psi^{\bbB}$) denotes the state in the $\bbA$-representation ($\bbB$-representation). The residual fields change roles:  $\sfa^{\bbB} = z^{\bbB}_11 + z^{\bbB}_2dt$, $\sfb^{\bbB} = z^{+,\bbB}_1\mu dt + z^{+,\bbB}_2 \mu$. Now the self-duality of Chern-Simons theory comes in handy. Let 
$\Gamma^{\bbA}$ be a diagram appearing in the $\bbA$-representation. Then there is a diagram $\Gamma^{\bbB}$ in the $\bbB$-representation obtained from $\Gamma^{\bbA}$ by reversing all arrows and exchanging $\sfb^{\bbA}$ (resp. $\sfa^{\bbA}$) with $\sfa^{\bbB}$ (resp. $\sfb^{\bbB}$) fields (and of course $\bbA$ and $\bbB$ fields). See figure \ref{fig:CorrespondingDiagrams}. We will call this diagram $\Gamma^{\bbB}$ the \emph{dual diagram} of $\Gamma^{\bbA}$. 
\begin{figure}[h]
\begin{subfigure}{0.4\textwidth}
\centering
\begin{tikzpicture}[scale=1.5]
\node[shape=coordinate] (O) at (0,0) {};
\node (resb) at (0,0.3) [shape=circle,above,draw,minimum size=1pt] {$\sfb$}
edge[middlearrow={>}] (O);
\node[shape=coordinate,label=below:{$\bbA$}] (bdry1) at (canvas polar cs: angle=-120, radius=1cm) {$\bbA$}
edge[<-,{Latex}-,thick] (O);
\node[shape=coordinate,label=below:{$\bbA$}] (bdry2) at (canvas polar cs: angle=-60, radius=1cm) {$\bbA$}
edge[<-,{Latex}-,thick] (O);
\tkzDrawCircle(O,bdry1)
  \tkzDrawPoints[color=black,fill=black,size=12](O,bdry1,bdry2)
\end{tikzpicture}
\caption{ $\Gamma^{\bbA}$ }
\end{subfigure}
\begin{subfigure}{0.4\textwidth}
\centering
\begin{tikzpicture}[scale=1.5]
\node[shape=coordinate] (O) at (0,0) {};
\node (resb) at (0,0.3) [shape=circle,above,draw,minimum size=1pt] {$\sfa$}
edge[middlearrow={<}] (O);
\node[shape=coordinate,label=below:{$\bbB$}] (bdry1) at (canvas polar cs: angle=-120, radius=1cm) {$\bbB$}
edge[-{Latex},thick] (O);
\node[shape=coordinate,label=below:{$\bbB$}] (bdry2) at (canvas polar cs: angle=-60, radius=1cm) {$\bbB$}
edge[-{Latex},thick] (O);
\tkzDrawCircle(O,bdry1)
  \tkzDrawPoints[color=black,fill=black,size=12](O,bdry1,bdry2)
\end{tikzpicture}
\caption{Corresponding  $\Gamma^{\bbB}$ }
\end{subfigure}
\caption{Corresponding diagrams in $\bbA$- and in $\bbB$-representations.}\label{fig:CorrespondingDiagrams}
\end{figure}
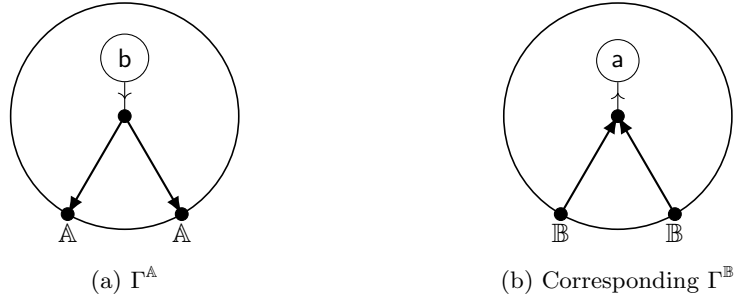
However, since we have 
$$ \eta^{\bbA}(x_1,x_2) = - \eta^{\bbB}(x_2,x_1),$$
and the form parts of $\sfa^{\bbA}$ and $\sfb^{\bbB}$ (resp. $\sfa^{\bbB}$ and $\sfb^{\bbA}$) are the same, the form parts of $\Gamma^{\bbA}$ and $\Gamma^{\bbB}$ are the same up to a sign. The structure constants and coordinates on the space of residual fields have to replaced with their ``dual'' counterparts. This amounts to the following prescription. To compute the state $\psi_{\Gamma^\bbB}$ from the state $\psi_{\Gamma^\bbA}$, 
\begin{itemize} 
\item replace $z^{i,k,\bbA}$ by $z_{i,k}^{+,\bbB}$, $z_{i,k}^{+,\bbA}$ by $z_{i,k,\bbB}$, 
\item replace $g^i_{jk}$ by $h_i^{jk}$, $g_{ijk}$ by $h^{ijk}$ and vice versa, 
\item replace every $\bbA^i$ by $\bbB_i$, 
\item multiply by $(-1)^{\#E(\Gamma_{\bbA})}$, 
where $E(\Gamma_{\bbA})$ denotes the set of edges of $\Gamma_{\bbA}$. 
\end{itemize}
\subsection{Reducing the residual fields}\label{sec:ReducingResFields}
Naively, after pairing the states $\psi_{M_1},\psi_{M_2}$, the new state is a function on the direct sum of the spaces of residual fields $\widetilde{\mathcal{V}_M} = \mathcal{V}_{M_1} \oplus \mathcal{V}_{M_2}$. This produces a valid state in the sense that it satisfies the mQME. However, in most cases it is not the minimal possible space of residual fields and it is possible to reduce it using the methods of \cite{Cattaneo2017}. We will discuss the reduction first since it will allow us to simplify the computations later.  The reduction is given by BV pushforward along the fibers of $\calV_{M_1} \oplus \calV_{M_2} \to \calV_{M}$. These fibers form a subspace of $\widetilde{\calV}_M$ called the subspace of \emph{redshirt residual fields}. \\
In the case of lens spaces there is a significant difference between the cases $p \neq 0$ and the case $p=0$, and we will discuss these separately. 
\subsubsection{Case $p\neq 0$} 
Let $M = L_{p,q}$ for $p \neq 0$. Recall that $M_1 = M_2 = D^2 \times S^1$, $\de_1M_1 = \de_2M_2 = S^1 \times S^1 =: \T^2$ and $\de_2M_1 = \de_1M_2 = \emptyset$. Let $\varphi\colon \T^2 \to \T^2$ be the diffeomorphism  given by $t \mapsto mt + p\theta, \theta \mapsto nt + q\theta $ so that $M = M_1 \cup_{\varphi} M_2$. We have $H^{\bullet}_{Di}(M_i) = H^{\bullet}(M_i,\de M_i) \cong H^{\bullet}(D^2,S^1) \otimes H^{\bullet}(S^1)$ and $H^{\bullet}(M_i) \cong H^{\bullet}(S^1)$ for $i\neq j$.
The spaces of redshirt residual fields can be identified by  $\int_{\Sigma}\sfb_1\sfa_2 = \int_{\Sigma}\sfb_1^{\times}\sfa_2^{\times}$.  
The reduced state is then  defined by 
\begin{equation}
\check{\psi} = \int_{\calL^\times}\widetilde{\psi} \label{eq:reduction}
\end{equation} 
where $\calL^\times$ is the zero section of $T^*[-1](L_1^{\times} \oplus L_2^\times)$. In our case, we have\begin{align*}
L_1^{\times} = \langle dt \rangle, L_2^{\times} = \langle m dt + p d\theta \rangle
\end{align*}
 Taking the pushforward of the zero section amounts to contracting a pair of $z_2^{+,\bbA}$ and  $z^{2,\bbB}$ coordinates to the number 
$$ V = \Lambda^{-1} = \left(\int_{\T^2}dt\varphi^*(dt)\right)^{-1},$$  while setting $z^{2,\bbA} = z_2^{+,\bbB} = 0$. For $L_{p,q}$ we have $V = \frac{1}{p}$.  
The resulting representatives of the cohomology of $L_{p,q}$ are
\begin{align*}
\chi_1 &= 1_{S^3} \\
\chi_2 &= 
\begin{cases} \mu_1 \wedge dt_1  &\text{ on } M_1 \\ 0 &\text{ on } M_2\end{cases} 
\end{align*}
with coordinates $z^1 = z^{1,\bbB}, z^2 = z^{1,\bbA}$ and dual basis (with respect to the Poincare pairing)
\begin{align*}
\chi^1 &= 
\begin{cases}  0 &\text{ on } M_1 \\\mu_2 \wedge dt_2  &\text{ on } M_2 \end{cases}  \\
\chi^2 &= 1_{S^3}
\end{align*}
with coordinates $z^+_1 = z^{+,\bbB}_{1}, z^+_2 = z^{+,\bbA}_{1}$.  We can use this to simplify the calculation of the state on $L_{p,q}$ by ignoring pairings of diagrams that would vanish after reducing residual fields. 
\subsubsection{The case $p=0$} In this case there are no redshirt residual fields, and we do not have to perform the reduction. The resulting manifold is $M = S^2 \times S^1$. Sticking to the same conventions as above, we will get representatives of cohomology \begin{align*}
\chi_1 &= 1 \\
\chi_2 &= dt \\
\chi_3 &= \begin{cases} \mu  &\text{ on } M_1 \\ 0 &\text{ on } M_2\end{cases} \\
\chi_4 &=
\begin{cases} \mu \wedge dt  &\text{ on } M_1 \\ 0 &\text{ on } M_2\end{cases} 
\end{align*}
with coordinates $z^1 = z^{1,\bbB},z^2=z^{2,\bbB},z^3=z^{2,\bbA},z^4=z^{1,\bbA}$. The dual basis is 
\begin{align*}
\chi^1 &= 
\begin{cases}  0 &\text{ on } M_1 \\ \mu \wedge dt  &\text{ on } M_2 \end{cases}  \\
\chi^2 &= \begin{cases}  0 &\text{ on } M_1 \\ \mu   &\text{ on } M_2 \end{cases} \\ 
\chi^3 &= dt \\ 
\chi^4 &= 1 
\end{align*}
with coordinates $z_1^+=z_1^{+,\bbB}, z_2^+=z_2^{+,\bbB}, z_3^+ = z_1^{+,\bbA},z_4^+=z_2{+,\bbA}$. 
\section{The effective action on lens spaces}
\label{sec:EffectiveAction}
We now proceed to the central part of this note, the computation of the effective action on lens spaces up to two-loop order. We will consider separately the cases where $(g,V,W)$ form a Manin triple and the one where it does not. 

Since we are interested in  the two-point effective action after gluing, we have to consider all pairs of diagrams with a total of at most two interaction vertices. Also, since we are interested in the state only after reduction of the residual fields, only pairings with no residual fields or the same number of $z_2^{+,\bbA}$ and $z^{2,\bbB}$ survive, all others can be ignored. We now compute all the relevant pairings for the case where $(\g, V, W)$ form a Manin triple. 
\subsection{Case of a Manin triple}
In this case, the $\sfA^3$ and $\sfB^3$ vertices vanish and the number of diagrams to be considered is considerably reduced. 
\subsubsection{Pairing against order 0 diagram}\label{sec:order0pairing}
First we consider all pairings against the single order 0 diagram on $M_2$. Its contribution to the state, since $M_2$ is in the $\bbB$-representation, is 
$$\Psi^{\bbB}_{\Gamma_0} = -z^{k,\bbB}_1\int_{\de M}\bbB_k - z_2^{k,\bbB}\int_{\de M}dt\bbB_k.$$
We have $\varphi^*dt = m dt + p d\theta$. Hence (as used already in section \ref{sec:2pttrees}) gluing a boundary point on the $\bbA$ side against the 0-point action up to constants corresponds to multiplying the corresponding form with $1,dt$ or $d\theta$ and integrating over that boundary point. Often, it is best to perform this integration first  - the results are known from Lemma \ref{lem:ResFieldBdryIntegration} - and then compute the integral.  Notice also that this diagram does not pair to diagrams with no boundary vertices. \\
 
We can pair the two 0-point terms on either side, see figure \ref{fig:order0pairing}. The result is 
\begin{equation} \psi_{\Gamma_0}^{\bbA} * \psi_{\Gamma_0}^{\bbB} = \int_{\de M} \sfb^{\bbA}\varphi^*\sfa^{\bbB}= z_{2,k}^{+,\bbA}z^{2,\bbB} \int_{\de M}dt \varphi^*dt = p z_{2,k}^{+,\bbA}z^{2,\bbB}.\label{eq:Gamma0*Gamma0}
\end{equation}

\begin{figure}[!h]
\centering 
\begin{tikzpicture} 
\node[coordinate] (bdry) at (0,0) {};
\node[circle,draw,left, minimum size=1pt] at (-0.4,0) {$\sfb$}
edge[middlearrow={>}] (bdry);
\node[circle, draw,right] at (0.4,0) {$\sfa$}
edge[middlearrow={<}] (bdry);
\node[] (top) at (0,1) {}; 
\node[ left] at (top.west) {$M_1$};
\node[right] at (top.east) {$M_2$}; 
\node[] (bottom) at (0,-1) {}; 
\draw[thick] (bottom) -- (top); 
\tkzDrawPoints[color=black, fill=black, size=12pt](bdry);
\end{tikzpicture}
\caption{Pairing order 0 diagrams on either side.} \label{fig:order0pairing}
\end{figure}
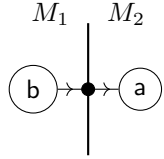

These are precisely the fields that we ought to reduce in the reduction of the redshirt residual fields as discussed in section \ref{sec:ReducingResFields}. We will perform this reduction in \ref{sec:effactionresfields}. \\
Let us turn to the 1-point diagrams. The first one ($\Gamma_{1,0}^{\bbA}$) does not contain $\bbA$ fields and hence does not pair to $\Gamma_0^{\bbB}$. The two possible pairings are shown in figure \ref{fig:order10pairing}. Using Lemma \ref{lem:ResFieldBdryIntegration} one quickly sees that the pairing in figure \ref{fig:Gamma11*Gamma0^2} vanishes. Alternatively one can use that $\int_{S^1,i}\eta_{S^1}(t_1,t_2)dt_i = 0$ for $i=1,2$.  Together with degree counting this implies the same. The remaining pairing shown in figure \ref{fig:Gamma11*Gamma0} evaluates to
\begin{equation} \psi^{\bbA}_{1,1} * \psi^{\bbB}_{\Gamma_0} = g^i_{jk}(z_{1i}^{+,\bbA}z^{1j,\bbA}-z_{2i}^{+,\bbA}z^{2j\bbA})z^{1k,\bbB} + mg^i_{jk}z_{1i}^{+,\bbA}z^{2j,\bbA}z_{1k,\bbB}.\label{eq:Gamma11*Gamma0}
\end{equation}

\begin{figure}[!h]
\centering 
\begin{subfigure}{0.3\textwidth}
\centering
\begin{tikzpicture} 
\node[coordinate] (bdry) at (0,0) {};
\node[circle, draw,right] at (0.4,0) {$\sfa$}
edge[middlearrow={<}] (bdry);
\node[coordinate] (bulk) at (-1,0) {} 
edge[-{Latex}, thick] (bdry); 
\node[circle,draw,above,minimum size=1pt] (res1) at (-1.2,0.2) {$\sfb$} 
edge[middlearrow={>}] (bulk);
\node[circle,draw,below,minimum size=1pt] (res2) at (-1.2,-0.2) {$\sfa$} 
edge[middlearrow={<}] (bulk);
\node[] (top) at (0,1.5) {}; 
\node[left] at (top.west) {$M_1$};
\node[right] at (top.east) {$M_2$}; 
\node[] (bottom) at (0,-1.5) {}; 
\draw[thick] (bottom) -- (top); 
\tkzDrawPoints[color=black, fill=black, size=12pt](bdry,bulk);
\end{tikzpicture}
\caption{$\psi_{\Gamma_{1,1}^\bbA} * \psi_{\Gamma^\bbB_0}$}\label{fig:Gamma11*Gamma0}
\end{subfigure}
\begin{subfigure}{0.3\textwidth}
\centering
\begin{tikzpicture} 
\node[coordinate] (bulk) at (-1,0) {};
\node[circle,draw,left,minimum size=1pt] (res1) at (-1.4,0) {$\sfb$} 
edge[middlearrow={>}] (bulk);
\node[coordinate] (bdry1) at (0,0.5) {}
edge[{Latex}-,thick] (bulk);
\node[circle, draw,right] at (0.4,0.5) {$\sfa$}
edge[middlearrow={<}] (bdry1);
\node[coordinate] (bdry2) at (0,-0.5) {}
edge[{Latex}-,thick] (bulk);
\node[circle, draw,right] at (0.4,-.5) {$\sfa$}
edge[middlearrow={<}] (bdry2);
\node[] (top) at (0,1.5) {}; 
\node[ left] at (top.west) {$M_1$};
\node[right] at (top.east) {$M_2$}; 
\node[] (bottom) at (0,-1.5) {}; 
\draw[thick] (bottom) -- (top); 
\tkzDrawPoints[color=black, fill=black, size=12pt](bdry1,bdry2,bulk);
\end{tikzpicture}
\caption{$\psi_{\Gamma_{1,1}^\bbA} * (\psi_{\Gamma^\bbB_0})^2$}\label{fig:Gamma11*Gamma0^2}
\end{subfigure}
\caption{Pairing 1-point diagrams on $M_1$ to 0-point diagram on $M_2$. } \label{fig:order10pairing}
\end{figure}

We can also pair against the loop diagrams, see figure \ref{fig:order20pairing} to obtain 
\begin{align}
\psi^{\bbA}_{\Gamma_{2,1}} * \psi^{\bbB}_{\Gamma_0} &= \frac{m}{12}z_{2i}^{+,\bbA}z^{2,l,\bbB}h^{ij}_kg^k_{jl} \label{eq:Gamma21*Gamma0}\\
\psi^{\bbA}_{\Gamma_{2,2}} * \psi^{\bbB}_{\Gamma_0} &= \frac{m^2}{24}z^{2,k,\bbB}z^{2,l,\bbB}h^{ij}_kg^k_{jl} \label{eq:Gamma22*Gamma0^2}
\end{align}

\begin{figure}[!h]

\begin{subfigure}[b]{0.3\textwidth}
\centering
\begin{tikzpicture} 
\node[coordinate] (bdry) at (0,1) {};
\node[circle, draw,right] at (0.4,1) {$\sfa$}
edge[middlearrow={<}] (bdry); 
\node[coordinate] (bulk1) at (-1,1) {} 
edge[-{Latex}, thick] (bdry); 
\node[coordinate] (bulk2) at (-1,-1) {};
\node[circle,draw,below,minimum size=1pt] (res1) at (-1.0,-1.3) {$\sfb$} 
edge[middlearrow={>}] (bulk2);
\draw[->,thick,-{Latex}] (bulk1) to[out=-60,in=60] (bulk2);
\draw[->,thick,-{Latex}] (bulk2) to[out=120,in=-120] (bulk1);
\node[] (top) at (0,1.5) {}; 
\node[ left] at (top.west) {$M_1$};
\node[right] at (top.east) {$M_2$}; 
\node[] (bottom) at (0,-1.5) {}; 
\draw[thick] (bottom) -- (top); 
\tkzDrawPoints[color=black, fill=black, size=12pt](bdry,bulk1,bulk2);
\end{tikzpicture}
\end{subfigure}
\begin{subfigure}[b]{0.3\textwidth}
\centering 
\begin{tikzpicture} 
\node[coordinate] (bdry1) at (0,1) {};
\node[circle, draw,right] at (0.4,1) {$\sfa$}
edge[middlearrow={<}] (bdry1);
\node[coordinate] (bdry2) at (0,-1) {};
\node[circle, draw,right] at (0.4,-1) {$\sfa$}
edge[middlearrow={<}] (bdry2);

\node[coordinate] (bulk1) at (-1,1) {} 
edge[-{Latex}, thick] (bdry1); 
\node[coordinate] (bulk2) at (-1,-1) {}
edge[-{Latex}, thick] (bdry2); 

 \draw[->,thick,-{Latex}] (bulk1) to[out=-60,in=60] (bulk2);
 \draw[->,thick,-{Latex}] (bulk2) to[out=120,in=-120] (bulk1);
\node[] (top) at (0,1.5) {}; 
\node[ left] at (top.west) {$M_1$};
\node[right] at (top.east) {$M_2$}; 
\node[] (bottom) at (0,-1.5) {}; 
\draw[thick] (bottom) -- (top); 
\tkzDrawPoints[color=black, fill=black, size=12pt](bdry1,bdry2,bulk1,bulk2);
\end{tikzpicture}
\end{subfigure}
\caption{Pairing 2-point diagrams on $M_1$ to 0-point diagram on $M_2$. } \label{fig:order20pairing}
\end{figure}
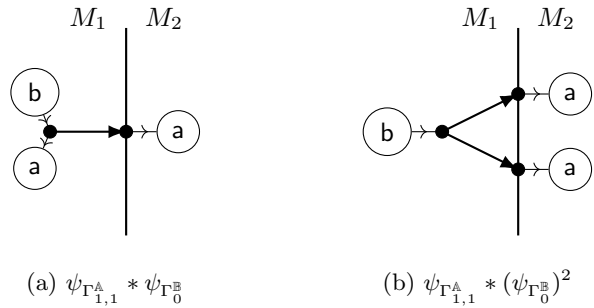

\subsubsection{Pairing the 1-point functions} 
Pairing the 1-point functions is computationally more intense, since we have to take the pullback of non-constant forms. The graph $\Gamma_{1,0}$ with no legs does not depend on boundary, hence its contribution and the one of its dual diagram simply add to the effective action. Part of it survives after reducing residual fields, and in fact we will show this is the only one-point contribution to the effective action. \\

Under the Manin triple assumption, the other pairings are the ones described in figure \ref{fig:order11pairing}. 

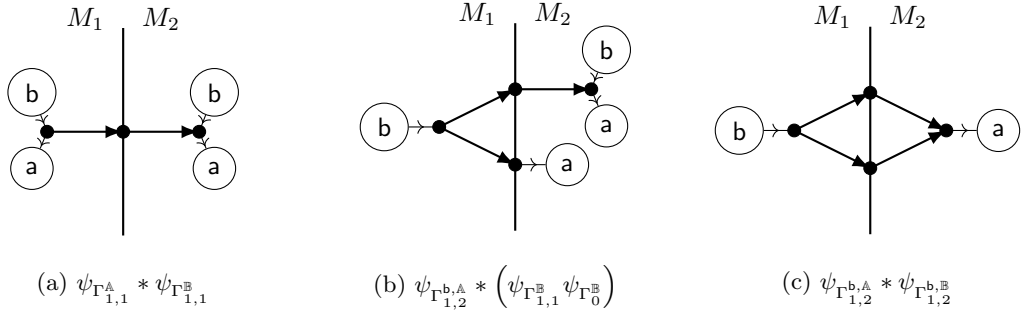
\begin{figure}[!h]
\begin{subfigure}{0.3\textwidth}
\centering
\begin{tikzpicture} 
\node[coordinate] (bdry) at (0,0) {};
\node[coordinate] (bulk1) at (-1,0) {} 
edge[-{Latex}, thick] (bdry); 
\node[circle,draw,above,minimum size=1pt] (res11) at (-1.2,0.2) {$\sfb$} 
edge[middlearrow={>}] (bulk1);
\node[circle,draw,below,minimum size=1pt] (res12) at (-1.2,-0.2) {$\sfa$} 
edge[middlearrow={<}] (bulk1);
\node[coordinate] (bulk2) at (1,0) {} 
edge[{Latex}-, thick] (bdry); 
\node[circle,draw,above,minimum size=1pt] (res21) at (1.2,0.2) {$\sfb$} 
edge[middlearrow={>}] (bulk2);
\node[circle,draw,below,minimum size=1pt] (res22) at (1.2,-0.2) {$\sfa$} 
edge[middlearrow={<}] (bulk2);
\node[] (top) at (0,1.5) {}; 
\node[ left] at (top.west) {$M_1$};
\node[right] at (top.east) {$M_2$}; 
\node[] (bottom) at (0,-1.5) {}; 
\draw[thick] (bottom) -- (top); 
\tkzDrawPoints[color=black, fill=black, size=12pt](bdry,bulk1,bulk2);
\end{tikzpicture}
\caption{$\psi_{\Gamma_{1,1}^{\bbA}} * \psi_{\Gamma_{1,1}^{\bbB}}$}
\end{subfigure}
\begin{subfigure}{0.3\textwidth}
\centering
\begin{tikzpicture}
\node[coordinate] (bdry1) at (0,0.5) {}
edge[{Latex}-,thick] (bulk1);
\node[coordinate] (bdry2) at (0,-0.5) {}
edge[{Latex}-,thick] (bulk1);
\node[circle, draw,right] at (0.4,-0.5) {$\sfa$}
edge[middlearrow={<}] (bdry2);
\node[coordinate] (bulk1) at (-1,0) {};
\node[circle,draw,left,minimum size=1pt] (res1) at (-1.4,0) {$\sfb$} 
edge[middlearrow={>}] (bulk1);
\node[coordinate] (bulk2) at (1,0.5) {} 
edge[{Latex}-, thick] (bdry1); 
\node[circle,draw,above,minimum size=1pt] (res21) at (1.2,0.7) {$\sfb$} 
edge[middlearrow={>}] (bulk2);
\node[circle,draw,below,minimum size=1pt] (res22) at (1.2,0.3) {$\sfa$} 
edge[middlearrow={<}] (bulk2);
\node[] (top) at (0,1.5) {}; 
\node[ left] at (top.west) {$M_1$};
\node[right] at (top.east) {$M_2$}; 
\node[] (bottom) at (0,-1.5) {}; 
\draw[thick] (bottom) -- (top); 
\tkzDrawPoints[color=black, fill=black, size=12pt](bdry1,bdry2,bulk1,bulk2);
\end{tikzpicture}
\caption{$\psi_{\Gamma^{\sfb,\bbA}_{1,2}} * \left(\psi_{\Gamma_{1,1}^{\bbB}}\psi_{\Gamma_0^{\bbB}}\right)$}
\end{subfigure}
\begin{subfigure}{0.3\textwidth}
\centering
\begin{tikzpicture} 
\node[coordinate] (bdry1) at (0,0.5) {}
edge[{Latex}-,thick] (bulk1);
\node[coordinate] (bdry2) at (0,-0.5) {}
edge[{Latex}-,thick] (bulk1);
\node[coordinate] (bulk1) at (-1,0) {};
\node[circle,draw,left,minimum size=1pt] (res1) at (-1.4,0) {$\sfb$} 
edge[middlearrow={>}] (bulk1);
\node[coordinate] (bulk2) at (1,0) {};
\node[circle,draw,right,minimum size=1pt] (res21) at (1.4,0) {$\sfa$} 
edge[middlearrow={<}] (bulk2);
\draw[-{Latex}, thick] (bdry1) -- (bulk2);
\draw[-{Latex}, thick] (bdry2) -- (bulk2);
\node[] (top) at (0,1.5) {}; 
\node[ left] at (top.west) {$M_1$};
\node[right] at (top.east) {$M_2$}; 
\node[] (bottom) at (0,-1.5) {}; 
\draw[thick] (bottom) -- (top); 
\tkzDrawPoints[color=black, fill=black, size=12pt](bdry1,bdry2,bulk1,bulk2);
\end{tikzpicture}
\caption{$\psi_{\Gamma^{\sfb,\bbA}_{1,2}} * \psi_{\Gamma^{\sfb,\bbB}_{1,2}}$}
\end{subfigure}
\caption{Pairing 1-point diagrams on $M_1$ to 1-point diagrams on $M_2$. } \label{fig:order11pairing}
\end{figure}
 
The next diagram $\Gamma_{1,1}$ has only constant form coefficients and the pairings are (see \cite{Wernli2018} for the computations)
\begin{align}
\psi_{\Gamma_{1,1}^{\bbA}} * \psi_{\Gamma_{1,1}^{\bbB}} &=  -n g^i_{jk}h^{kl}_m z_{1i}^{+,\bbA}z^{2j,\bbA}z^{1m,\bbB}z_{2l}^{+,\bbB}\label{eq:psi11*psi11} \\
\left(\psi_{\Gamma_{1,1}^{\bbA}}\psi_{\Gamma_0^{\bbA}}\right) * \psi_{\Gamma_{1,2}^{\sfb, \bbB}}  &=  n g^i_{jk}h^{klm}z_{1i}^{+,\bbA}z^{2j,\bbA}z_{1,l}^{+,\bbA}z_{2,m}^{+,\bbB} \label{eq:psi11psi0*psi12} \\
\psi_{\Gamma^{\sfb,\bbA}_{1,2}} * \left(\psi_{\Gamma_{1,1}^{\bbB}}\psi_{\Gamma_0^{\bbB}}\right) &=  n h_{ijk}h^{kl}_mz^{2i,\bbA}z^{1j,\bbB}z^{1m,\bbB}z_{2,l}^{+,\bbB} \label{eq:psi12*psi11psi0}\\
\psi_{\Gamma^{\sfb,\bbA}_{1,2}} * \psi_{\Gamma^{\sfb,\bbB}_{1,2}} &= g^i_{jk}h^{jk}_lz_{2i}^{+,\bbA}z^{2l,\bbB}\begin{cases} \frac{-p}{2}s(q,p) & p \neq 0 \\ \frac{q}{12} & p=0 \end{cases} \label{eq:psi12*psi12}
\end{align}
 Here $s(q,p)$ denotes the Dedekind Sum 
\begin{equation}
s(q,p) = \sum_{k=0}^{p-1}\left(\left(\frac{k}{p}\right)\right)\left(\left(\frac{qk}{p}\right)\right)\label{eq:defDedekindSum}
\end{equation}
where
$$ ((x)) = \begin{cases} 0 & x \in \Z \\
x - \lfloor x \rfloor - 1/2 & x \notin \Z \end{cases} $$ 
\subsubsection{Reducing the residual fields} \label{sec:effactionresfields}
If $p\neq 0$, we have to reduce the residual fields as discussed in \ref{sec:ReducingResFields}. We recall that this amounts to pairing $z_{2i}^{+,\bbA}$ with $z^{2,j,\bbB}$ to $\delta^j_i\cdot 1/p$ and setting their conjugates variables $z^{2,i,\bbA} = z_{2,j}^{+,\bbB} = 0.$ This eliminates many of the pairings above, namely, \eqref{eq:Gamma22*Gamma0^2},\eqref{eq:psi11*psi11},\eqref{eq:psi11psi0*psi12} and \eqref{eq:psi12*psi11psi0}. We denote the resulting effective action with $S_{eff}^{MT}$, where $MT$ stands for Manin triple. We have that 
\begin{equation}
S_{eff}^{MT} = S_{eff}^{MT,(1)} + S_{eff}^{MT,(2)} 
\end{equation}
where
\begin{equation}
S_{eff}^{MT,(1)} = g^i_{jk}\left(z^{+,\bbA}_{1i}z^{1j,\bbA}z^{1k,\bbB} +\frac{1}{2}z^{+,\bbB}_{1i}z^{1j,\bbB}z^{1k,\bbB}\right) + h^{jk}_i\left(z^{1i,\bbB}z_{1j}^{+,\bbA}z_{1k}^{+,\bbB} +\frac{1}{2}z^{+,\bbA}_{1i}z^{1j,\bbA}z^{1k,\bbA}\right)
\end{equation}
and 
\begin{equation}
S_{eff}^{MT,(2)} = g^i_{jk}h_i^{jk}\left(\frac{1}{2}s(q,p)+\frac{q+m}{12p}\right) \label{eq:SeffMT2}
\end{equation}
\subsection{The general case}
At this order, the Manin triple condition  amounts to ignoring diagrams $\Gamma^{\sfa}_{1,2}$ and $\Gamma_{1,3}$. Considering them amounts to computing the additional pairings described in figures \ref{fig:Gammaa12pairing}  and \ref{fig:Gamma13pairing}, respectively. To simplify the computations we will now drop terms that vanish after reducing fields, i.e.  we keep only terms that contain no $z^{2,i,\bbA}$- and $z_{2,j}^{+\bbB}$-variables and exactly the same number of $z_{2,i}^{+,\bbA}$ and $z^{2,j,\bbB}$ variables. In figure \ref{fig:Gammaa12pairing} this eliminates the diagrams 
\ref{fig:psi_a12_2}, \ref{fig:psi_a12_3}, \ref{fig:psi_a12_4}, together with degree counting: Ths dimension of the domain of integration is 3+2+2+3=10 in all cases, but the corresponding form degrees are 12, 11, and 14 respectively. 
On the other hand, diagram \ref{fig:psi_a12_1} yields a contribution corresponding to the $\check{\sfa}^3$ vertex on the glued lens space, while its dual will contribute the $\check{\sfb}^3$ vertex. 
\begin{figure}[!h]
\begin{subfigure}{0.3\textwidth}
\centering
\begin{tikzpicture}
\node[coordinate] (bdry1) at (0,0.5) {}
edge[{Latex}-,thick] (bulk1);
\node[circle, draw,right] at (0.4,0.5) {$\sfa$}
edge[middlearrow={<}] (bdry1);
\node[coordinate] (bdry2) at (0,-0.5) {}
edge[{Latex}-,thick] (bulk1);
\node[circle, draw,right] at (0.4,-0.5) {$\sfa$}
edge[middlearrow={<}] (bdry2);
\node[coordinate] (bulk1) at (-1,0) {};
\node[circle,draw,left,minimum size=1pt] (res1) at (-1.4,0) {$\sfa$} 
edge[middlearrow={<}] (bulk1);
\node[] (top) at (0,1.5) {}; 
\node[ left] at (top.west) {$M_1$};
\node[right] at (top.east) {$M_2$}; 
\node[] (bottom) at (0,-1.5) {}; 
\draw[thick] (bottom) -- (top); 
\tkzDrawPoints[color=black, fill=black, size=12pt](bdry1,bdry2,bulk1);
\end{tikzpicture}
\caption{$\psi_{\Gamma^{\sfa,\bbA}_{1,2}} * \left(\psi_{\Gamma_0^{\bbB}}\right)^2$}
\label{fig:psi_a12_1}
\end{subfigure}
\begin{subfigure}{0.3\textwidth}
\centering
\begin{tikzpicture}
\node[coordinate] (bdry1) at (0,0.5) {}
edge[{Latex}-,thick] (bulk1);
\node[coordinate] (bdry2) at (0,-0.5) {}
edge[{Latex}-,thick] (bulk1);
\node[circle, draw,right] at (0.4,-0.5) {$\sfa$}
edge[middlearrow={<}] (bdry2);
\node[coordinate] (bulk1) at (-1,0) {};
\node[circle,draw,left,minimum size=1pt] (res1) at (-1.4,0) {$\sfa$} 
edge[middlearrow={<}] (bulk1);
\node[coordinate] (bulk2) at (1,0.5) {} 
edge[{Latex}-, thick] (bdry1); 
\node[circle,draw,above,minimum size=1pt] (res21) at (1.2,0.7) {$\sfb$} 
edge[middlearrow={>}] (bulk2);
\node[circle,draw,below,minimum size=1pt] (res22) at (1.2,0.3) {$\sfa$} 
edge[middlearrow={<}] (bulk2);
\node[] (top) at (0,1.5) {}; 
\node[ left] at (top.west) {$M_1$};
\node[right] at (top.east) {$M_2$}; 
\node[] (bottom) at (0,-1.5) {}; 
\draw[thick] (bottom) -- (top); 
\tkzDrawPoints[color=black, fill=black, size=12pt](bdry1,bdry2,bulk1,bulk2);
\end{tikzpicture}
\caption{$\psi_{\Gamma^{\sfa,\bbA}_{1,2}} * \left(\psi_{\Gamma_{1,1}^{\bbB}}\psi_{\Gamma_0^{\bbB}}\right)$}\label{fig:psi_a12_2}
\end{subfigure}
\begin{subfigure}{0.3\textwidth}
\centering
\begin{tikzpicture} 
\node[coordinate] (bdry1) at (0,0.5) {}
edge[{Latex}-,thick] (bulk1);
\node[coordinate] (bdry2) at (0,-0.5) {}
edge[{Latex}-,thick] (bulk1);
\node[coordinate] (bulk1) at (-1,0) {};
\node[circle,draw,left,minimum size=1pt] (res1) at (-1.4,0) {$\sfa$} 
edge[middlearrow={<}] (bulk1);
\node[coordinate] (bulk2) at (1,0) {};
\node[circle,draw,right,minimum size=1pt] (res21) at (1.4,0) {$\sfa$} 
edge[middlearrow={<}] (bulk2);
\draw[-{Latex}, thick] (bdry1) -- (bulk2);
\draw[-{Latex}, thick] (bdry2) -- (bulk2);
\node[] (top) at (0,1.5) {}; 
\node[ left] at (top.west) {$M_1$};
\node[right] at (top.east) {$M_2$}; 
\node[] (bottom) at (0,-1.5) {}; 
\draw[thick] (bottom) -- (top); 
\tkzDrawPoints[color=black, fill=black, size=12pt](bdry1,bdry2,bulk1,bulk2);
\end{tikzpicture}
\caption{$\psi_{\Gamma^{\sfa,\bbA}_{1,2}} * \psi_{\Gamma^{\sfb,\bbB}_{1,2}}$}
\label{fig:psi_a12_3}
\end{subfigure}
\begin{subfigure}{0.3\textwidth}
\centering
\begin{tikzpicture} 
\node[coordinate] (bdry1) at (0,0.5) {}
edge[{Latex}-,thick] (bulk1);
\node[coordinate] (bdry2) at (0,-0.5) {}
edge[{Latex}-,thick] (bulk1);
\node[coordinate] (bulk1) at (-1,0) {};
\node[circle,draw,left,minimum size=1pt] (res1) at (-1.4,0) {$\sfa$} 
edge[middlearrow={<}] (bulk1);
\node[coordinate] (bulk2) at (1,0) {};
\node[circle,draw,right,minimum size=1pt] (res21) at (1.4,0) {$\sfb$} 
edge[middlearrow={>}] (bulk2);
\draw[-{Latex}, thick] (bdry1) -- (bulk2);
\draw[-{Latex}, thick] (bdry2) -- (bulk2);
\node[] (top) at (0,1.5) {}; 
\node[ left] at (top.west) {$M_1$};
\node[right] at (top.east) {$M_2$}; 
\node[] (bottom) at (0,-1.5) {}; 
\draw[thick] (bottom) -- (top); 
\tkzDrawPoints[color=black, fill=black, size=12pt](bdry1,bdry2,bulk1,bulk2);
\end{tikzpicture}
\caption{$\psi_{\Gamma^{\sfb,\bbA}_{1,2}} * \psi_{\Gamma^{\sfb,\bbB}_{1,2}}$}
\label{fig:psi_a12_4}
\end{subfigure}
%
\caption{Pairing diagram $\Gamma^{\sfa,\bbA}_{1,2}$ on $M_1$ to diagrams on $M_2$. Here we excluded the diagrams including $\Gamma^\bbB_{1,3}$, we will get them from symmetry from the ones for $\Gamma^\bbA_{1,3}$. 
 } \label{fig:Gammaa12pairing}
\end{figure}
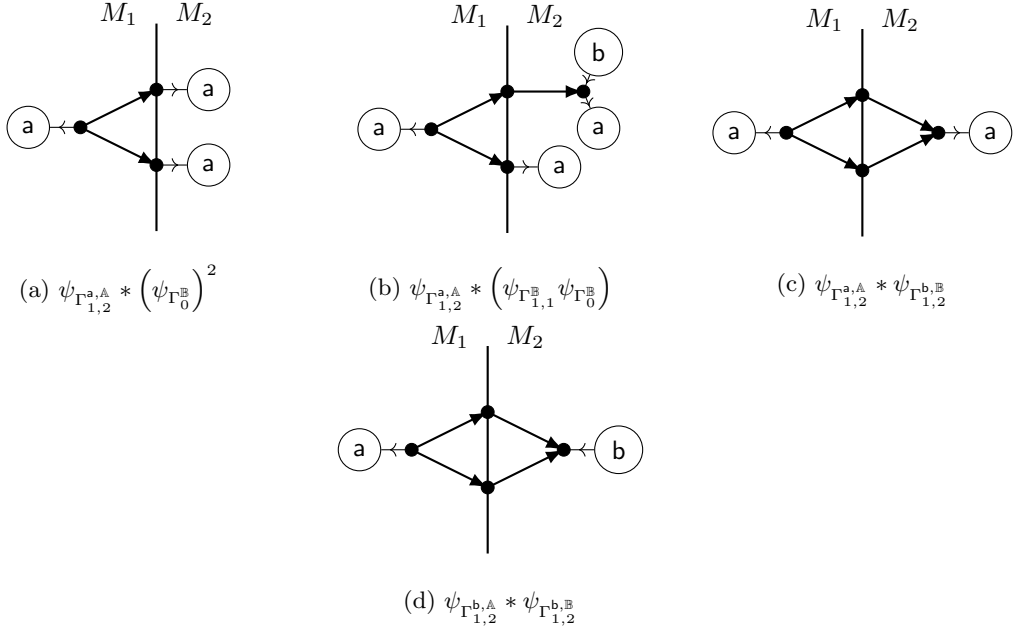

Now let us look at the diagrams in figure \ref{fig:Gamma13pairing}. The first two  diagrams in figures \ref{fig:psi_13_1},\ref{fig:psi_13_2}, and also the one in figure \ref{fig:psi_13_4} do not contribute, even if we do not reduce residual fields, this follows from the discussion on vanishing of two-point tree contributions after gluing. After reducing residual fields, diagram \ref{fig:psi_13_3} vanishes for degree reasons: only the zero-form part of $\sfa$ survives, so the total form degree is 10, while integration is over a 12-dimensional space. By degree counting, the only nonzero term contains the one-form parts of $\sfb^{\bbA}$ and $\sfa^{\bbB}$. After reducing residual fields, this corresponds to part of a theta diagram for the glued propagator, together with the last diagram \ref{fig:psi_13_6}. 
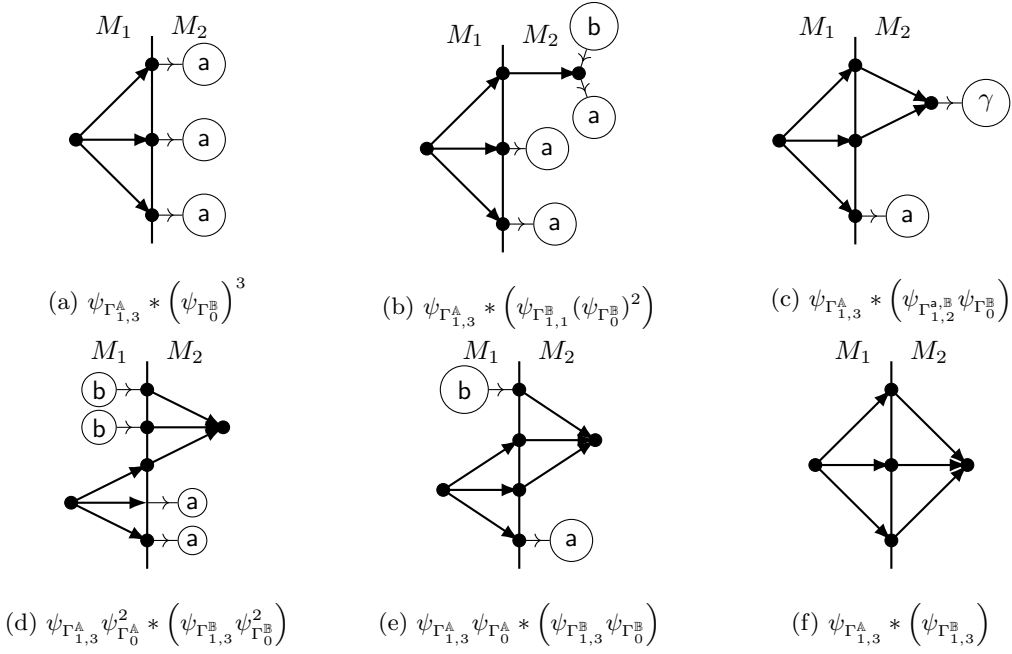
\begin{figure}[!h]
\begin{subfigure}{0.3\textwidth}
\centering
\begin{tikzpicture}
\node[coordinate] (bulk1) at (-1,0) {};

\node[coordinate] (bdry1) at (0,1) {}
edge[{Latex}-,thick] (bulk1);
\node[circle, draw,right] at (0.4,1) {$\sfa$}
edge[middlearrow={<}] (bdry1);
\node[coordinate] (bdry2) at (0,0) {}
edge[{Latex}-,thick] (bulk1);
\node[circle, draw,right] at (0.4,0) {$\sfa$}
edge[middlearrow={<}] (bdry2);
\node[coordinate] (bdry3) at (0,-1) {}
edge[{Latex}-,thick] (bulk1);
\node[circle, draw,right] at (0.4,-1) {$\sfa$}
edge[middlearrow={<}] (bdry3);
\node[] (top) at (0,1.5) {}; 
\node[ left] at (top.west) {$M_1$};
\node[right] at (top.east) {$M_2$}; 
\node[] (bottom) at (0,-1.5) {}; 
\draw[thick] (bottom) -- (top); 
\tkzDrawPoints[color=black, fill=black, size=12pt](bdry1,bdry2,bdry3,bulk1);
\end{tikzpicture}
\caption{$\psi_{\Gamma^{\bbA}_{1,3}} * \left(\psi_{\Gamma_0^{\bbB}}\right)^3$}
\label{fig:psi_13_1}
\end{subfigure}
\begin{subfigure}{0.3\textwidth}
\centering
\begin{tikzpicture}
\node[coordinate] (bulk1) at (-1,0) {};
\node[coordinate] (bdry1) at (0,1) {}
edge[{Latex}-,thick] (bulk1);
\node[coordinate] (bdry2) at (0,0) {}
edge[{Latex}-,thick] (bulk1);
\node[circle, draw,right] at (0.3,0) {$\sfa$}
edge[middlearrow={<}] (bdry2);
\node[coordinate] (bdry3) at (0,-1) {}
edge[{Latex}-,thick] (bulk1);
\node[circle, draw,right] at (0.4,-1) {$\sfa$}
edge[middlearrow={<}] (bdry3);
\node[coordinate] (bulk2) at (1,1) {} 
edge[{Latex}-, thick] (bdry1); 
\node[circle,draw,above,minimum size=1pt] (res21) at (1.2,1.3) {$\sfb$} 
edge[middlearrow={>}] (bulk2);
\node[circle,draw,below,minimum size=1pt] (res22) at (1.2,0.7) {$\sfa$} 
edge[middlearrow={<}] (bulk2);
\node[] (top) at (0,1.5) {}; 
\node[ left] at (top.west) {$M_1$};
\node[right] at (top.east) {$M_2$}; 
\node[] (bottom) at (0,-1.5) {}; 
\draw[thick] (bottom) -- (top); 
\tkzDrawPoints[color=black, fill=black, size=12pt](bdry1,bdry2,bdry3,bulk1,bulk2);
\end{tikzpicture}
\caption{$\psi_{\Gamma^{\bbA}_{1,3}} * \left(\psi_{\Gamma_{1,1}^{\bbB}}(\psi_{\Gamma_0^{\bbB}})^2\right)$}
\label{fig:psi_13_2}
\end{subfigure}
\begin{subfigure}{0.3\textwidth}
\centering
\begin{tikzpicture} 
\node[coordinate] (bulk1) at (-1,0) {};
\node[coordinate] (bdry1) at (0,1) {}
edge[{Latex}-,thick] (bulk1);
\node[coordinate] (bdry2) at (0,0) {}
edge[{Latex}-,thick] (bulk1);
\node[coordinate] (bdry3) at (0,-1) {}
edge[{Latex}-,thick] (bulk1);
\node[circle, draw,right] at (0.4,-1) {$\sfa$}
edge[middlearrow={<}] (bdry3);
\node[coordinate] (bulk2) at (1,0.5) {};
\node[circle,draw,right,minimum size=1pt] (res21) at (1.4,0.5) {$\gamma$} 
edge[middlearrow={<}] (bulk2);
\draw[-{Latex}, thick] (bdry1) -- (bulk2);
\draw[-{Latex}, thick] (bdry2) -- (bulk2);
\node[] (top) at (0,1.5) {}; 
\node[ left] at (top.west) {$M_1$};
\node[right] at (top.east) {$M_2$}; 
\node[] (bottom) at (0,-1.5) {}; 
\draw[thick] (bottom) -- (top); 
\tkzDrawPoints[color=black, fill=black, size=12pt](bdry1,bdry2,bdry3,bulk1,bulk2);
\end{tikzpicture}
\caption{$\psi_{\Gamma^{\bbA}_{1,3}} * \left(\psi_{\Gamma^{\sfa,\bbB}_{1,2}}\psi_{\Gamma_0^\bbB}\right)$}
\label{fig:psi_13_3}
\end{subfigure}
\begin{subfigure}{0.3\textwidth}
\centering
\begin{tikzpicture} 
\node[coordinate] (bulk1) at (-1,-0.5) {};

\node[coordinate] (bdry0) at (0,1) {};
\node[circle, draw,left,inner sep=1.5pt] at (-0.4,1) {$\sfb$}
edge[middlearrow={>}] (bdry0);
\node[coordinate] (bdry1) at (0,0.5) {};
\node[circle, draw,left,inner sep=1.5pt] at (-0.4,0.5) {$\sfb$}
edge[middlearrow={>}] (bdry1);
\node[coordinate] (bdry2) at (0,0) {}
edge[{Latex}-,thick] (bulk1);
\node[coordinate] (bdry4) at (0,-0.5) {}
edge[{Latex}-,thick] (bulk1);
\node[circle, draw,right,inner sep=1.5pt] at (0.4,-0.5) {$\sfa$}
edge[middlearrow={<}] (bdry4);
\node[coordinate] (bdry5) at (0,-1) {}
edge[{Latex}-,thick] (bulk1);
\node[circle, draw,right,inner sep=1.5pt] at (0.4,-1) {$\sfa$}
edge[middlearrow={<}] (bdry3);
\node[coordinate] (bulk2) at (1,0.5) {};
\draw[-{Latex}, thick] (bdry0) -- (bulk2);
\draw[-{Latex}, thick] (bdry1) -- (bulk2);
\draw[-{Latex}, thick] (bdry2) -- (bulk2);
\node[] (top) at (0,1.5) {}; 
\node[ left] at (top.west) {$M_1$};
\node[right] at (top.east) {$M_2$}; 
\node[] (bottom) at (0,-1.5) {}; 
\draw[thick] (bottom) -- (top); 
\tkzDrawPoints[color=black, fill=black, size=12pt](bdry0,bdry1,bdry2,bdry3,bulk1,bulk2);
\end{tikzpicture}
\caption{$\psi_{\Gamma^{\bbA}_{1,3}}\psi^2_{\Gamma^\bbA_0} * \left(\psi_{\Gamma^{\bbB}_{1,3}}\psi^2_{\Gamma^\bbB_0}\right)$}
\label{fig:psi_13_4}
\end{subfigure}
\begin{subfigure}{0.3\textwidth}
\centering
\begin{tikzpicture} 
\node[coordinate] (bulk1) at (-1,-0.33) {};
\node[coordinate] (bdry0) at (0,1) {};
\node[circle, draw,left] at (-0.4,1) {$\sfb$}
edge[middlearrow={>}] (bdry0);
\node[coordinate] (bdry1) at (0,0.33) {}
edge[{Latex}-,thick] (bulk1);
\node[coordinate] (bdry2) at (0,-0.33) {}
edge[{Latex}-,thick] (bulk1);
\node[coordinate] (bdry3) at (0,-1) {}
edge[{Latex}-,thick] (bulk1);
\node[circle, draw,right] at (0.4,-1) {$\sfa$}
edge[middlearrow={<}] (bdry3);
\node[coordinate] (bulk2) at (1,0.33) {};
\draw[-{Latex}, thick] (bdry0) -- (bulk2);
\draw[-{Latex}, thick] (bdry1) -- (bulk2);
\draw[-{Latex}, thick] (bdry2) -- (bulk2);
\node[] (top) at (0,1.5) {}; 
\node[ left] at (top.west) {$M_1$};
\node[right] at (top.east) {$M_2$}; 
\node[] (bottom) at (0,-1.5) {}; 
\draw[thick] (bottom) -- (top); 
\tkzDrawPoints[color=black, fill=black, size=12pt](bdry0,bdry1,bdry2,bdry3,bulk1,bulk2);
\end{tikzpicture}
\caption{$\psi_{\Gamma^{\bbA}_{1,3}}\psi_{\Gamma^{\bbA}_0} * \left(\psi_{\Gamma^{\bbB}_{1,3}}\psi_{\Gamma^\bbB_0}\right)$}
\label{fig:psi_13_5}
\end{subfigure}
\begin{subfigure}{0.3\textwidth}
\centering
\begin{tikzpicture} 

\node[coordinate] (bulk1) at (-1,0) {};
\node[coordinate] (bulk2) at (1,0) {};
\node[coordinate] (bdry1) at (0,1) {}
edge[{Latex}-,thick] (bulk1);
\node[coordinate] (bdry2) at (0,0) {}
edge[{Latex}-,thick] (bulk1);
\node[coordinate] (bdry3) at (0,-1) {}
edge[{Latex}-,thick] (bulk1);
\draw[-{Latex}, thick] (bdry1) -- (bulk2);
\draw[-{Latex}, thick] (bdry2) -- (bulk2);
\draw[-{Latex}, thick] (bdry3) -- (bulk2);
\node[] (top) at (0,1.5) {}; 
\node[ left] at (top.west) {$M_1$};
\node[right] at (top.east) {$M_2$}; 
\node[] (bottom) at (0,-1.5) {}; 
\draw[thick] (bottom) -- (top); 
\tkzDrawPoints[color=black, fill=black, size=12pt](bdry1,bdry2,bdry3,bulk1,bulk2);
\end{tikzpicture}
\caption{$\psi_{\Gamma^{\bbA}_{1,3}} * \left(\psi_{\Gamma^\bbB_{1,3}}\right)$}
\label{fig:psi_13_6}
\end{subfigure}
\caption{Pairing diagram $\Gamma^{\bbA}_{1,3}$ on $M_1$ to diagrams on $M_2$. Here $\gamma \in \{\sfa,\sfb\}$.  } \label{fig:Gamma13pairing}
\end{figure}
Below we list the weights of the glued graphs, we only list the ones yielding a non-zero contribution after reducing the residual fields. 
\begin{align}
\psi_{\Gamma^{\sfa,\bbA}_{1,2}} * \left(\psi_{\Gamma_0^{\bbB}}\right)^2 &= \frac{1}{2}g_{ijk}z^{1i\bbA}z^{1j\bbB}z^{1k\bbB} \\ 
\psi_{\Gamma^{\bbA}_{1,3}}\psi_{\Gamma^{\bbA}_0} * \left(\psi_{\Gamma^{\bbB}_{1,3}}\psi_{\Gamma^\bbB_0}\right) &=  h^{ijk}g_{ljk}z_{2i}^{+,\bbA}z^{2l,\bbB}\frac{1}{4}\int_{(\de M)^4}(\omega_{\Gamma_{0,3}})_{123}dt_4(\varphi^{\times 4})^*(\omega_{\Gamma_{0,3}})_{234}dt_1) \label{eq:psiB12psiB0psiA31ptpairing}\\
\psi_{\Gamma_{1,3}^{\sfb,\bbB}} * \psi_{\Gamma_{1,3}^{\sfb,\bbA}} &= \frac{1}{6}h^{ijk}g_{ijk}\int_{(\de M)^3}(\omega_{\Gamma_{0,3}})(\varphi^{\times 3})^*(\omega_{\Gamma_{0,3}})
\end{align}
The resulting effective action is 
\begin{equation}
S_{eff} = S_{eff}^{MT} + S_{eff}^{NMT} = S_{eff}^{MT} + S_{eff}^{NMT, (1)}  + S_{eff}^{NMT, (2)}  
\end{equation}
where 
\begin{equation}
S_{eff}^{NMT,(1)} =  \frac{1}{2}f_{ijk}z^{1i\bbA}z^{1j\bbB}z^{1k\bbB} +  \frac{1}{2}f^{ijk}z_{1i}^{+,\bbB}z_{1j}^{+,\bbA}z_{1k}^{+,\bbA}
\end{equation}
and 
\begin{equation}
S_{eff}^{NMT,(2)} = \frac{1}{2}f_{ijk}f^{ijk}\left(s(q,p) + \sum_{k=0}^{p-1}\eta_{S^1}(k/p)f(qk/p) + \eta_{S^1}(k/p)f(mk/p) + \frac{m+p}{2\pi^2}H_{1/p}\right)\label{eq:SeffNMT2}
\end{equation} where the function $f \colon S^1 \to \R$ is given by 
\begin{equation} f(\theta) = \cos(2\pi\theta)\eta_{S^1}(\theta)-\frac{1}{\pi}\sin2\pi\theta\log2|\sin\pi\theta| \label{eq:def_f}
\end{equation}
for $\theta \notin \Z$, and $f(k)= 0$ for $k \in \Z$, and $H_x$ denotes analytic extension of the harmonic numbers to $\R$. 
\subsection{Weights of oriented Theta graphs on lens spaces}
From this computation we can now extract the weights of oriented theta graphs on lens spaces by comparing with the state on the glued lens space computed with the reduced glued residual fields and the corresponding glued propagator.   We will briefly describe the first diagrams contributing to the state on any rational homology sphere, and then use the formula for the reduced propagator to identify these with the diagrams arising from the gluing described in the previous paragraphs.
\subsubsection{Low-order diagrams on rational homology spheres} 
On orientable rational homology spheres $M$ the space of residual fields is $\left(\coho(M) \oplus \coho(M)\right)[1]$. A choice of volume form $v \in H^3(M)$ gives a basis $\langle 1, v \rangle$ for $\coho(M)$. Since there is no boundary, there are no source terms. Feynman diagrams can only be closed graphs, possibly with residual fields placed the vertices. Hence the 1-point contribution to the effective action is three zero modes contracted at a single vertex and integrated over $M$. This yields a numerical coefficient which is either 0 or 1, times a cubic polynomial in the coordinate on the space of residual fields, multiplied with the structure constants of the corresponding vertex\footnote{Notice that this is true on any closed manifold where the representatives of the  residual fields are closed under wedge product.}. 
\begin{figure}[h]
\centering
\begin{tikzpicture}[scale=1]

  \node[shape=coordinate] (O) at (0,0) {};
  \node (resb) at (0,0.5) [shape=circle,above,draw,minimum size=1pt] {$\gamma$}
edge (O);
\node (resa) at (-0.3,-0.4) [shape=circle,below left,draw,minimum size=1pt] {$\gamma$}
edge (O);
\node (resa) at (0.3,-0.4) [shape=circle,below right,draw,minimum size=1pt] {$\gamma$}
edge (O);
  \tkzDrawPoint[color=black,fill=black,size=12](O)
\end{tikzpicture}
\caption{One-point function on rational homology spheres, $\gamma \in \{\sfa,\sfb\}$}
\end{figure} 
The two point contribution to the effective action consists of connected graphs with exactly two bulk vertices. If there is a single edge between them (and two residual fields on either side) the contribution vanishes since the propagator vanishes when integrated against residual fields. That leaves diagrams with two and three edges. For a diagram with two edges, we have to place a residual field at both vertices. Since residual fields are inhomogeneous forms concentrated in form degrees 0 and 3, and the propagator is a 2-form, but we integrate over the 6-dimensional space $M \times M$, these contributions also vanish. Hence we only have the oriented theta graphs \ref{fig:RHS_2ptgraphs_3} and \ref{fig:RHS_2ptgraphs_4}. Notice that the Manin triple condition rules out $\Gamma_{3,al}$. 
\begin{figure}[!h]
\centering
\begin{subfigure}{0.23\textwidth}
\centering
\begin{tikzpicture}[scale=1]
\coordinate (O) at (0,0);
 \coordinate (bulk1) at (-0.75,0) {};
 \coordinate (bulk2) at (0.75,0.0) {};
 \node[shape=circle,draw,above left] (res1) at (-0.6,0.4) {$\gamma$}
 edge (bulk1);
 \node[shape=circle,draw,above right] (res2) at (0.6,0.4) {$\gamma$}
 edge (bulk2);
 \draw[->,thick,-{Latex}] (bulk1) to[out=60,in=120] (bulk2);
 \draw[->,thick,-{Latex}] (bulk2) to[out=-120,in=-60] (bulk1);
  \tkzDrawPoints[color=black,fill=black,size=12](bulk1,bulk2);
\end{tikzpicture}
\caption{$\Gamma^{\gamma\gamma}_{2,op}$}\label{fig:RHS_2ptgraphs_1}
\end{subfigure}
\begin{subfigure}{0.23\textwidth}
\centering
\begin{tikzpicture}[scale=1]
\coordinate (O) at (0,0);
 \coordinate (bulk1) at (-0.75,0) {};
 \coordinate (bulk2) at (0.75,0.0) {};
 \node[shape=circle,draw,above left] (res1) at (-0.6,0.4) {$\gamma$}
 edge (bulk1);
 \node[shape=circle,draw,above right] (res2) at (0.6,0.4) {$\gamma$}
 edge (bulk2);
 \draw[->,thick,-{Latex}] (bulk1) to[out=60,in=120] (bulk2);
 \draw[->,thick,{Latex}-] (bulk2) to[out=-120,in=-60] (bulk1);
  \tkzDrawPoints[color=black,fill=black,size=12](bulk1,bulk2);
\end{tikzpicture}
\caption{$\Gamma^{\gamma\gamma}_{2,al}$}\label{fig:RHS_2ptgraphs_2}
\end{subfigure}
\begin{subfigure}{0.23\textwidth}
\centering
\begin{tikzpicture}[scale=1]
\coordinate (O) at (0,0);
 \coordinate (bulk1) at (-0.75,0) {};
 \coordinate (bulk2) at (0.75,0.0) {};
 \draw[->,thick,-{Latex}] (bulk1) to[out=60,in=120] (bulk2);
 \draw[->,thick,-{Latex}] (bulk1) to (bulk2);
 \draw[->,thick,-{Latex}] (bulk2) to[out=-120,in=-60] (bulk1);
  \tkzDrawPoints[color=black,fill=black,size=12](bulk1,bulk2);
\end{tikzpicture}
\caption{$\Gamma_{3,op}$}\label{fig:RHS_2ptgraphs_3}
\end{subfigure}
\begin{subfigure}{0.23\textwidth}
\centering
\begin{tikzpicture}[scale=1]
\coordinate (O) at (0,0);
 \coordinate (bulk1) at (-0.75,0) {};
 \coordinate (bulk2) at (0.75,0.0) {};
 \draw[->,thick,-{Latex}] (bulk1) to[out=60,in=120] (bulk2);
 \draw[->,thick,-{Latex}] (bulk1) to (bulk2);
 \draw[->,thick,{Latex}-] (bulk2) to[out=-120,in=-60] (bulk1);
  \tkzDrawPoints[color=black,fill=black,size=12](bulk1,bulk2);
\end{tikzpicture}
\caption{$\Gamma_{3,al}$}\label{fig:RHS_2ptgraphs_4}
\end{subfigure}
\caption{Oriented two-point diagrams. $\gamma \in \{\sfa,\sfb\}$.}\label{fig:RHS_2ptgraphs}
\end{figure}
\subsubsection{Lens spaces} 
On lens spaces one can use the decomposition described in section \ref{sec:Gluing} to compute the diagrams using the glued propagator for reduced residual fields computed in \cite{Cattaneo2017}. The result are precisely the diagrams described in sections \ref{sec:eff_action_torus}. In particular, we can identify the weights of the oriented theta diagrams with the terms in effective actions \eqref{eq:SeffMT2} and \eqref{eq:SeffNMT2}, namely  (ignoring the Lie algebra coefficients for a second)
\begin{align}
w_{\Gamma_{3,op}} &= \frac{1}{2}s(q,p) + \frac{q + m}{12p} \label{eq:ThetaManin}\\
w_{\Gamma_{3,al}} &= \frac{1}{2}s(q,p) + \sum_{k=0}^{p-1}\eta_{S^1}(k/p)f(qk/p) + \eta_{S^1}(k/p)f(mk/p) + 
\frac{q + m}{2\pi^2}H_{1/p}\label{eq:ThetaNotManin}
\end{align}

The Lie algebra coefficient of $w_{\Gamma_{3,op}}$ is given by 
\begin{equation}
e(\g) = g_{ij}^kh^{ij}_k \label{eq:defecc}
\end{equation} while the Lie algebra coefficient of $w_{\Gamma_{3,al}}$ is given by 
\begin{equation}
e'(\g) = g_{ijk}h^{ijk}. \label{eq:defecc2}
\end{equation}
We summarize the results in the following theorem that was announced in the introduction: 
\begin{thm}\label{thm:mainresult}
Consider the lens space $L_{p,q}^\varphi$ obtained from gluing $L_{p,q} = (S^1 \times D) \cup_{\varphi} (S^1 \times D) $, where $\varphi\colon S^1 \times S^1 \to S^1\times S^1$ is given by $\varphi = \begin{pmatrix}
m & p \\
n & q
\end{pmatrix}$. Then, the two-loop contribution to the state of split Chern-Simons theory on the lens space, obtained from gluing the states on two solid tori in polarizations $\mathcal{P}$ and $\calP'$  is given by 
 \begin{equation}
 w^{MT}_2 = e(\g)\left(\frac{1}{2}s(q,p) + \frac{q + m}{12p}\right)
 \end{equation}
 if  $V,W$ are subalgebras of $\g$. 
 Otherwise, the two-loop contribution is 
 \begin{equation}
 w_2^{NMT} = w_2^{MT} + e'(\g)\left(\frac{1}{2}s(q,p) + \sum_{k=0}^{p-1}\eta_{S^1}(k/p)f(qk/p) + \eta_{S^1}(k/p)f(mk/p) + 
\frac{q + m}{2\pi^2}H_{1/p}\right)
 \end{equation}
 \end{thm}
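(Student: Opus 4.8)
The plan is to assemble the stated two-loop weights from the diagram-by-diagram gluing computation developed throughout Section \ref{sec:EffectiveAction}. The theorem is ultimately a bookkeeping statement: the glued state is the BKS pairing of the two solid-torus states in the polarizations $\calP$ and $\calP'$, and its two-loop part is extracted by collecting all pairings of diagrams whose total number of interaction vertices is at most two and whose residual-field content survives the reduction of Section \ref{sec:ReducingResFields}, that is, equal numbers of $z_2^{+,\bbA}$ and $z^{2,\bbB}$ and no $z^{2,\bbA}$ or $z_2^{+,\bbB}$.

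First I would treat the Manin triple case. When both $V$ and $W$ are subalgebras, isotropy forces the structure constants $g_{ijk}$ and $h^{ijk}$ to vanish, so the $\sfA^3$ and $\sfB^3$ vertices drop out and only the $\sfA\sfB\sfB$ and $\sfA\sfA\sfB$ vertices survive. After gluing, the only two-loop contribution is then the oriented theta graph $\Gamma_{3,op}$, whose weight I would read off from $\Seff^{MT,(2)}$ in \eqref{eq:SeffMT2}. This weight has two pieces: the Dedekind-sum term $\tfrac12 s(q,p)$ comes from the tree-level pairing $\psi_{\Gamma^{\sfb,\bbA}_{1,2}} * \psi_{\Gamma^{\sfb,\bbB}_{1,2}}$ of \eqref{eq:psi12*psi12}, while the $\tfrac{q+m}{12p}$ term is produced by the genuine loop diagram $\Gamma_{2,1}$ of \eqref{eq:Gamma21*Gamma0}, paired against the zero-point diagram, together with its dual on the second torus and the factor $V=1/p$ introduced by the residual-field reduction. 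Identifying the Lie-algebraic prefactor $e(\g)=g^k_{ij}h^{ij}_k$ of \eqref{eq:defecc} then yields $w_2^{MT}$, i.e.\ the weight $w_{\Gamma_{3,op}}$ of \eqref{eq:ThetaManin}.

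Next I would handle the general case by reinstating the $\sfA^3$ and $\sfB^3$ vertices, i.e.\ the diagrams $\Gamma^{\sfa}_{1,2}$ and $\Gamma_{1,3}$. The additional two-loop contribution is the alternately oriented theta graph $\Gamma_{3,al}$, whose weight $w_{\Gamma_{3,al}}$ of \eqref{eq:ThetaNotManin} comes from $\Seff^{NMT,(2)}$ in \eqref{eq:SeffNMT2}. For this one must carry out the pairings organized in Figures \ref{fig:Gammaa12pairing} and \ref{fig:Gamma13pairing}, discarding those that vanish for degree reasons or by the two-point-tree argument of Section \ref{sec:2pttrees}, and retaining the surviving ones such as \eqref{eq:psiB12psiB0psiA31ptpairing} and the $\Gamma^{\sfb}_{1,3}*\Gamma^{\sfb}_{1,3}$ pairing. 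The relevant prefactor is $e'(\g)=g_{ijk}h^{ijk}$ from \eqref{eq:defecc2}, and adding these terms to $w_2^{MT}$ produces $w_2^{NMT}$.

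The main obstacle is the evaluation of the distributional circle integrals appearing in the loop and theta pairings. These require the \emph{ad hoc} regularization of Section \ref{loops} to annihilate the ill-defined products $(\delta^{(1)}_{S^1})^2$ and $\delta^{(1)}_{S^1}\eta_{S^1}$, after which the surviving contributions reduce to finite sums over $k/p$ of products of the circle propagator $\eta_{S^1}$ with the disk data. Determining precisely which of these sums reproduce the Dedekind sum and which yield the transcendental corrections encoded by $f$ and the analytically continued harmonic number $H_{1/p}$ is the delicate part of the computation. Alongside this, the degree counting that eliminates the non-contributing diagrams, and the careful tracking of signs under arrow reversal and the polarization duality of Section \ref{sec:Brep}, must be carried out with some care.
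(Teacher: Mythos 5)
Your proposal follows essentially the same route as the paper: it reproduces the paper's diagram-by-diagram gluing computation, correctly attributing the Dedekind-sum term to the pairing $\psi_{\Gamma^{\sfb,\bbA}_{1,2}} * \psi_{\Gamma^{\sfb,\bbB}_{1,2}}$ of \eqref{eq:psi12*psi12}, the $\tfrac{q+m}{12p}$ term to the loop diagrams \eqref{eq:Gamma21*Gamma0} glued against the zero-point diagram (with the reduction factor $V=1/p$), and the non--Manin-triple corrections to the $\Gamma^{\sfa}_{1,2}$ and $\Gamma_{1,3}$ pairings surviving the reduction, exactly as in Sections \ref{sec:EffectiveAction} and the identification with $w_{\Gamma_{3,op}}$, $w_{\Gamma_{3,al}}$ in \eqref{eq:ThetaManin}--\eqref{eq:ThetaNotManin}. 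The plan is correct, with the same structure, vanishing arguments, and regularization caveats as the paper's own treatment.
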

 \subsection{Example of a split Lie algebra}
 We briefly consider an exampl of a Manin triple $\mathfrak{d}$ with $e(\mathfrak{d}) \neq 0$. Recall that any Lie bialgebra $\g$ gives rise to a Manin triple $\mathfrak{d} = \g \bowtie \g^*$ via the Drinfeld double construction. We apply this for $\g = \R x \oplus \R y$ with only nontrivial bracket given by $[x,y] = y$. This is a Lie bialgebra, the Lie algebra structure on the dual is given by $[x^*,y^*] = y^*$. Hence we have $e(\mathfrak{d}) = 2$. For more examples, see \cite[Chapter 4]{Wernli2018}.
 \subsection{Dependence on choices} 
 We want to emphasize that the weights as given in Theorem \ref{thm:mainresult} depend on two choices: The choice of a gluing diffeomorphism $\varphi$, and the choice of a splitting $g = V \oplus W$. \\
 In the special case where $(\g,V,W)$ forms a Manin triple, it can be shown that $e(\g)$ does not depend on the choice of splitting $\g = V \oplus W$ as long as $V,W$ remain subalgebras, in fact it is a multiple of the quadratic Casimir of $\g$ \cite[Chapter 4]{Wernli2018}. Hence the weight of the theta graph does also not depend on it. The numeric weight of the Theta graph behaves, under changes of the gluing diffeomorphism, in an equivalent way to what was observed in the literature (see the next Section below). \\
 In the general case, however, the weight of the Theta graph computed with BV-BFV gluing does depend on the splitting (since in that case $e(\g), e'(\g)$ do). This is, of course, unexpected, since on closed spacetimes the theory does not depend on the splitting at all. Our interpretation is that there is a sort of ``polarization defect'' associated to the torus hypersurface in the glued lens space. The detailed nature of this phenomenon will be the subject of further investigation. 
 
\section{Comparing to existing results} \label{sec:Comparing}
The Theta invariant has been previously investigated by Axelrod and Singer \cite{Axelrod1991, Axelrod1994} and following up on their method by Bott and Cattaneo in \cite{Bott1998,Bott1999}. Following up on a preprint by Kontsevich (\cite{Kontsevich1994}), slightly different methods were employed by Kuperberg-Thurston \cite{Kuperberg1999} and Lescop \cite{Lescop2004a, Lescop2004, Lescop2015, Lescop2016} to investigate  the Theta invariant. Axelrod and Singer use a Riemann-Hodge gauge fixing 
$$ K_g = d_g^* \circ (\Delta_g + P_g)^{-1} $$ 
associated to a particular metric $g$ on $M$. In this formula,  $d_g^*$ is the codifferential, $\Delta_g = d d^*_g + d^*_g d$ is the Hodge-de Rham Laplacian, and $P_g$ is the projection to harmonic forms. They analyse how the weight of the Theta graph changes when the metric is varied continuously. Since in this paper we fix the axial gauge from the beginning, and this is a non-metric gauge, we cannot compare their results to ours directly. However, it is shown in the thesis \cite{Wernli2018} of the third author that the axial gauge can be approximated arbitrarily well by Riemann-Hodge gauge fixings. We aim to exploit this result in the future to connect our results to theirs. 
\subsection{Kuperberg-Thurston-Lescop Theta invariants}
On the other hand, Lescop defines a Theta invariant of \emph{framed} rational homology spheres  $(M,\tau)$ by 
\begin{equation}
\Theta(M,\tau) = \int_{C_2(M)} \omega^3 
\end{equation}
for a propagator $\omega \in \Omega^2(C_2(M,\tau))$ defined using the parallelization $\tau$. Note that there is no factor of $1/6$ like in our normalization. It is further shown \cite{Lescop2004a, Lescop2004} that 
$$ \Theta(M,g \cdot \tau) - \Theta(M,\tau) = \frac{1}{2}\deg(g) $$ where $g\colon M \to SO(3)$ and $\deg$ denotes the degree of the map, i.e. the invariant changes by a half-integer under a normalized change of framing. She concludes that 
$$ \Theta(M) := \Theta(M,\tau) - \frac{1}{4}p_1(\tau) $$ 
where $p_1(\tau)$ is a certain relative Pontryagin number associated to $\tau$, is an invariant of rational homology 3-spheres, and observes that 
$$\Theta(M) = 6\lambda(M),$$
where $\lambda(M)$ is the Casson-Walker invariant of rational homology 3-spheres, normalised to $1/2$ of Walker's original normalisation \cite{Walker1992}. 
\subsection{Comparison with weights of oriented Theta graphs}
We have shown that in the Manin triple case, the weight of the oriented Theta graph for the lens space $L(p,q)$ is (equation \eqref{eq:ThetaManin}
$$\frac{1}{2}s(q,p) + \frac{q+m}{12p}.$$ 
Note that the first term is precisely $\lambda(L_{p,q})$, since 
$\lambda_W(L_{p,q}) = s(q,p)$ (\cite{Walker1992}). On the other hand, we can change $q$ and $m$ by a multiple of $p$ by composing the gluing diffeomorphsim with a Dehn twist on one of the solid tori, thus changing the 2-framing of the resulting lens space by $
\pm 1$ (see \cite{Freed1991}). This change results in shifting the weight of the Theta graph by $\pm 1/12$.  \\ 
On the other hand, in the non-Manin triple case there is an irrational term in the weight of the Theta graph, and if we change $q$ or $m$, the weight changes by $\frac{1}{2\pi^2}H_{1/p}$ where $H_x$ is the analytic continuation of the harmonic numbers to real values. Indeed these weights have no analogue in the literature known to the authors. Still, note that up to a shift by a multiple of an irrational number they only depend on $p$ and $q (\mathrm{mod} p)$.

\section{Conclusions and Outlook}  \label{sec:Conclusion}
In this note we have computed the lowest orders of the Chern-Simons state for a split Lie algebra on lens spaces using the quantum BV-BFV formalism. We fixed most of the choices, except for the choice of gluing diffeomorphism, and polarization of the Lie algebra of coefficients. We analyzed how the state changes as we vary these choices. In particular, if we choose a polarization that is compatible with the algebraic structure we obtain results that are very similar to the ones observed in the literature on perturbative Chern-Simons invariants, namely, that the two-loop part, up to a framing dependent term, equals the Casson-Walker invariant. If we choose a polarization which is incompatible with the algebraic structure, we obtain different weights that so far have not been observed in the literature. \\
It has often been observed that the state depends heavily on the polarization, a pedagogical example can be found in \cite{Weinstein1991}. It has been proposed that in the quantization of symplectic groupoids one should always use polarizations compatible with the algebraic structure \cite{Hawkins2008}. In our case we can observe that the map 
$\calF^{\de} \to \calB$ from boundary fields to the base space of the polarization is a Lie algebra morphism if and only if we choose a Manin triple polarization.  \\
It would be interesting to see whether we can extend these results to a larger class of 3-manifolds. A first difficulty is that on higher genus handlebodies there is no axial gauge, so we cannot do explicit computations as in this paper. Another approach based on theta functions that might generalize to higher genus easier is considered in \cite{Wernli2018a}. Also, on handlebodies of nonzero Euler characteristic there might be problems in the regularization of tadpoles. 

\bibliography{BVBFV}
\bibliographystyle{kw}
\end{document}